\newcommand{\Z}{\mathbb Z}
\newcommand{\R}{\mathbb{R}}
\theoremstyle{plain}
\newtheorem{theorem}[subsection]{Theorem}
\newtheorem{proposition}[subsection]{Proposition}
\newtheorem{lemma}[subsection]{Lemma}
\newtheorem{corollary}[subsection]{Corollary}
\title[An additive problem]{The additive problem for the number of representations as a sum of two squares}
\author{Fernando Chamizo}
\thanks{The author is partially supported by the MTM2017-83496-P grant of the
MICINN (Spain) and by ``Severo Ochoa Programme for Centres of Excellence in
R{\&}D'' (SEV-2015-0554).}
\address{Departamento de Matem\'{a}ticas and ICMAT. Universidad Aut\'{o}noma de Madrid. 28049 Madrid, Spain}
\begin{document}

\begin{abstract}
 We improve a previous unconditional result  about the asymptotic behavior of $\sum_{n\le x} r(n)r(n+m)$ with $r(n)$ the number of representations of $n$ as a sum of two squares when $m$ may vary with $x$.
\end{abstract}

\maketitle


\section{Introduction}
We consider the analogue of the additive divisor problem when the usual divisor function $\tau(n)$ is replaced by $r(n)$, the number of representations of~$n$ as a sum of two squares, which is related to the divisor function for the Gaussian integers. Namely, we study the asymptotic behavior of
\begin{equation}\label{S_def}
 S(x,m)
 =
 \sum_{n\le x} r(n)r(n+m)
 \qquad\text{with}\quad m\in\Z^+.
\end{equation}
With a broad view this can be considered a shifted  convolution of theta coefficients. 
\smallskip

Apparently Estermann was the first author considering this problem~\cite{estermann}. 
His result implies that for $m$ fixed and $2^k$ the $2$-part in the factorization of~$m$
\begin{equation}\label{S_asymp}
 S(x,m)\sim
 8\big|2^{k+1}-3\big|
 \sigma\Big(\frac{m}{2^k}\Big)
 \frac{x}{m}
 \qquad\text{as}\quad x\to\infty.
\end{equation}
Actually in the original paper \cite{estermann} the coefficient of $x/m$ is expressed in the more compact form $8\sum_{d\mid m}(-1)^{m+d} d$ which arguably gives less insight about its size for large values of $m$ and about the role of the powers of~2.  

Estermann's paper is hard to read (it includes a list of more than 40 abbreviations for limits of summation) but the underlying idea is clear: Write $S(x,m)$ in terms of Kloosterman sums and use individual bounds for them. The obtained error term is weak and it can be readily improved using Weil's bound not available at that time. Following \cite{iwaniec}, a more compact and stronger approach without any reference to Kloosterman sums  is to interpret $S(x,m)$ as a hyperbolic circle problem and use the spectral expansion of automorphic kernels. 
An old unpublished result by Selberg (cf. \cite{selberg}) gives the error term~$O(x^{2/3})$ for the hyperbolic circle problem. It is even today the best known result and it translates into a similar error term for $S(x,1)$ \cite{iwaniec}. 
\medskip

Here we address the size of the error term and its uniformity in $m$ to study to what extent $m$ can depend on $x$ keeping \eqref{S_asymp} valid. This problem was treated in \cite{chamizo_c}. Most of the results were stated there under a certain conjecture on spectral theory but the last section includes some unconditional results. In connection with the asymptotic formula, it was proved \cite[Cor.5.3]{chamizo_c} that \eqref{S_asymp} is valid if $m=m(x)$ satisfies $m=O\big(x^{17/11-\epsilon}\big)$ for some $\epsilon>0$. 

Our main result approximates $S(x,m)$ improving the bound for the error term given in \cite{chamizo_c}.
We prefer to establish it in terms of the optimal exponent for the Hecke eigenvalues $\lambda_j(m)$ of the Maass-Hecke waveforms (see the next section for more on the notation). This is defined as 
\begin{equation}\label{lth}
 \theta=
 \inf
 \big\{
 t\ge 0\,:\,
 |\lambda_j(m)|= O\big( m^t\big)
 \big\}.
\end{equation}
The celebrated Ramanujan-Petersson conjecture \cite{BlBr} claims $\theta=0$  in the stronger form $\lambda_j(m)|\le \tau(m)$ but this is out of reach with current methods. 
\smallskip

Our main result is: 

\begin{theorem}\label{main}
 Define $E(x,m)$ to be the error term in \eqref{S_asymp}
 \[
  E(x,m)
  =
  S(x,m)-
  8\big|2^{k+1}-3\big|
  \sigma\Big(\frac{m}{2^k}\Big)
  \frac{x}{m}.
 \]
 Then we have for every $\epsilon>0$
 \[
  m^{-\epsilon}E(x,m)
    \ll
  \begin{cases}
   x^{2/3} & \text{if } m^{3/2+3\theta}\le x,
   \\
   m^{(1+2\theta)/4}x^{1/2} & \text{if } m^{3/2-\theta}\le x<m^{3/2+3\theta},
   \\
   m^{2\theta/3}x^{2/3} & \text{if } m\le x<m^{3/2-\theta},
   \\
   m^{(1+2\theta)/3}x^{1/3} & \text{if } x<m.
  \end{cases}
 \]
\end{theorem}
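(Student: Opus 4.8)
The plan is to realize $S(x,m)$ as a hyperbolic lattice-point count and to expand it spectrally, following \cite{iwaniec,chamizo_c}. Writing $r(n)$ as the number of Gaussian integers of norm $n$, the product $r(n)r(n+m)$ counts pairs $(z,w)\in\Z[i]^2$ with $|w|^2-|z|^2=m$, and summing over $n\le x$ imposes $|z|^2\le x$; thus $S(x,m)$ is governed by the action of the $m$-th Hecke operator on an automorphic kernel for a congruence subgroup, the relevant spectrum being that of the weight-$0$ Maass-Hecke waveforms $u_j$. After inserting a smooth weight whose width is controlled by a parameter and unfolding, one expects an expansion of the form
\[
 S(x,m)=M(x,m)+\sum_j \lambda_j(m)\,\nu_j\, h(t_j)+(\text{continuous spectrum}),
\]
where $\nu_j$ is an $m$-independent amplitude built from values and Fourier coefficients of $u_j$ at a fixed CM point, and $h=h(\,\cdot\,;x,m)$ is the Selberg/Harish-Chandra transform of the counting kernel. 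I would first check that the identity term together with the continuous spectrum reproduces exactly the arithmetic main term $8|2^{k+1}-3|\sigma(m/2^k)x/m$ of \eqref{S_asymp}, so that $E(x,m)$ is precisely the discrete spectral sum above, up to a negligible smoothing transition.

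The error is then estimated by balancing three independent sources. First, the intrinsic Selberg barrier for the hyperbolic circle problem contributes a clean $O(x^{2/3})$, with no $x^\epsilon$ and no $m$-dependence, which this method cannot beat (cf. \cite{selberg}). Second, truncating the spectral sum smoothly at a cutoff $|t_j|\le T$ costs a smoothing error $\asymp x/T$. Third, the truncated spectral sum itself I would bound using the individual Hecke estimate $|\lambda_j(m)|\ll m^{\theta+\epsilon}$ from \eqref{lth} together with the spectral large sieve applied to the $m$-independent amplitudes $\nu_j$; since the shifted argument $n+m$ ranges over an interval of length $\asymp\max(x,m)$, this produces a bound of the shape $m^{\theta}\big(\max(x,m)\big)^{1/2}T^{1/2}$, up to $m^{\epsilon}$. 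Optimizing $T$ against $x/T$ then yields the body of the theorem.

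Carrying out the optimization produces the four regimes, and continuity across the boundaries serves as the consistency check. In the generic range $m\le x<m^{3/2-\theta}$ the cutoff is interior and balancing $x/T$ against $m^{\theta}x^{1/2}T^{1/2}$ gives $T\asymp x^{1/3}m^{-2\theta/3}$ and the bound $m^{2\theta/3}x^{2/3}$ (third case). For larger $x$ the spectral sum is instead governed by a global, untruncated large-sieve bound of size $\asymp m^{(1+2\theta)/4}x^{1/2}$, so the smoothing error $x/T$ can be driven down for free; this frozen spectral contribution dominates the Selberg term until $x=m^{3/2+3\theta}$, beyond which the barrier $x^{2/3}$ takes over, yielding the second and first cases. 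When $x<m$ the geometry degenerates, the relevant modulus in the large sieve becomes $\asymp m$ rather than $x$, and the same interior balance with $m^{\theta}m^{1/2}T^{1/2}$ returns $T\asymp x^{2/3}m^{-(1+2\theta)/3}$ and the bound $m^{(1+2\theta)/3}x^{1/3}$ (fourth case). A short computation confirms that the four expressions agree on the common boundaries $x=m$, $x=m^{3/2-\theta}$, and $x=m^{3/2+3\theta}$.

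The hard part will be the uniformity in $m$. The delicate points are: pinning down the exact size and effective support of the transform $h(t;x,m)$ for the shifted kernel, since this fixes both the smoothing error $x/T$ and the cutoff at which the spectral contribution saturates; extracting the continuous-spectrum part cleanly so that it merges into the main term $8|2^{k+1}-3|\sigma(m/2^k)x/m$ rather than leaking into the error; and controlling the $m$-dependence of the large sieve so that all the $\epsilon$-losses are confined to powers of $m$. This last point is consistent with the statement because in every regime where an arithmetic factor appears one has $x\ll m^{O(1)}$, hence $x^{\epsilon}\ll m^{\epsilon'}$, whereas the leading first case is the genuinely $\epsilon$-free Selberg bound. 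The degenerate range $x<m$ requires separate care, since the two circles have very different radii and the spectral amplitudes must be estimated against the longer modulus; this is also where any alternative, more elementary Kloosterman-sum treatment (improved by Weil's bound, as noted for Estermann's original approach) could be compared against the spectral estimate.
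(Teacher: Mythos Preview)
Your overall spectral setup is right, and the optimization you describe does recover the third and fourth regimes: with the crude bound
\[
 \mathcal S(m,T):=\sum_{T\le |t_j|<2T}|\lambda_j(m)|\,|u_j(z_0)|^2\ \ll\ m^{\theta+\epsilon}T^2
\]
(individual Hecke bound plus the local Weyl law $\sum|u_j(z_0)|^2\ll T^2$), balancing the smoothing loss against the spectral tail gives exactly $m^{2\theta/3}x^{2/3}$ for $m\le x$ and $m^{(1+2\theta)/3}x^{1/3}$ for $x<m$, as you compute. However, this is \emph{all} the crude bound yields for $m\le x$; it never improves to $x^{2/3}$ or to $m^{(1+2\theta)/4}x^{1/2}$ as $x$ grows, so your first and second regimes are not established.

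The two assertions that would produce those regimes are both unjustified. The ``intrinsic Selberg barrier $O(x^{2/3})$'' applies to the untwisted hyperbolic circle problem; here every spectral term carries the factor $\lambda_j(m)$, and with only the individual bound one is stuck with an extra $m^{\theta}$, hence with $m^{2\theta/3}x^{2/3}$ rather than $x^{2/3}$. Likewise, there is no ``global, untruncated large-sieve bound of size $m^{(1+2\theta)/4}x^{1/2}$'' available from the spectral large sieve acting on the amplitudes $\nu_j=|u_j(z_0)|^2$: these are nonnegative point values, not Fourier coefficients, and the only unconditional input of that sort is $\sum_{|t_j|\sim T}|u_j(z_0)|^2\ll T^2$, which is exactly what you already used. (Incidentally, the main term comes from the \emph{constant} eigenfunction, not from the continuous spectrum; the Eisenstein contribution stays in the error and is handled by $|\eta_t(m)|\le\tau(m)$ together with the second moment of $E_{\mathfrak a}$ on the critical line.)

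What is actually needed for the first and second ranges is an estimate of the shape
\[
 \mathcal S(m,T)\ \ll\ m^{\epsilon}T^2\bigl(1+\min(m^{\theta},\,m^{1/2}T^{-1})\bigr),
\]
which exhibits genuine cancellation when $T>m^{1/2-\theta}$. In the paper this is obtained by applying Cauchy's inequality \emph{after} the Hecke identity $|\lambda_j(m)|^2=\sum_{d\mid m}\lambda_j(d^2)$, so that one must bound the sign-changing sums $\sum_j \lambda_j(d^2)e^{-t_j^2/T^2}|u_j(z_0)|^2$; these are then evaluated geometrically via the pretrace formula and a lattice-point count at the CM point $z_0$, yielding $\mathcal S_d(T)\ll d^{-1}T^2+d$. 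Without this ingredient (credited in the paper to R.~Steiner), the optimization simply cannot reach the bounds $x^{2/3}$ and $m^{(1+2\theta)/4}x^{1/2}$, and the boundary matchings you checked at $x=m^{3/2-\theta}$ and $x=m^{3/2+3\theta}$, while arithmetically consistent, do not correspond to anything your method proves.
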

If $|\lambda_j(p)|=O\big(p^{\theta_0}\big)$ for $p$ prime, the multiplicative properties of $\lambda_j(m)$ imply $\theta\le\theta_0$ and Theorem~\ref{main} is valid replacing $\theta$ by $\theta_0$. In this case $m^{\epsilon}$ could be substituted by certain power of $\tau(m)$. In particular if $m$ is assumed to have a bounded number of divisors then we can take $\epsilon=0$.

\begin{corollary}
 Under Ramanujan-Petersson conjecture
 \[
  E(x,m)\ll x^{2/3}+m^{1/3+\epsilon} x^{1/3}
  \qquad\text{for every $\epsilon>0$}.
 \]
\end{corollary}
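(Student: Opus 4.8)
The plan is to specialize Theorem~\ref{main} to the value $\theta=0$ supplied by the Ramanujan--Petersson conjecture and then repackage the four regimes into the two terms of the asserted bound. First I would substitute $\theta=0$ in the four alternatives. The decisive observation is that the middle regime, governed by $m^{3/2-\theta}\le x<m^{3/2+3\theta}$, degenerates to the empty condition $m^{3/2}\le x<m^{3/2}$ and therefore drops out entirely. What survives are three ranges: the first and third, now both carrying the trivial factor $m^{2\theta/3}=1$, jointly cover $m\le x$ and give the common bound $x^{2/3}$; the fourth covers $x<m$ with exponent $(1+2\theta)/3=1/3$, giving $m^{1/3}x^{1/3}$.

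Next I would verify that these two surviving estimates fuse into the claimed sum. For $x\ge m$ one has $x^{2/3}\ge m^{1/3}x^{1/3}$, so in that range the first term is the larger; for $x<m$ the reverse inequality holds and $m^{1/3}x^{1/3}$ dominates. Consequently the single expression $x^{2/3}+m^{1/3}x^{1/3}$ reproduces the correct order of magnitude in every regime, and Theorem~\ref{main} delivers $m^{-\epsilon}E(x,m)\ll x^{2/3}+m^{1/3}x^{1/3}$ after this trivial comparison.

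Since the main step is purely mechanical—reading off $\theta=0$ and discarding the vacuous case—there is no substantive obstacle; the only point requiring care is the bookkeeping of the factor $m^{\epsilon}$. Under the \emph{strong} form of Ramanujan--Petersson, $|\lambda_j(m)|\le\tau(m)$, the remark following Theorem~\ref{main} permits $m^{\epsilon}$ to be replaced throughout by a fixed power $\tau(m)^{A}$. I would keep this divisor factor attached to the second term, using $\tau(m)\ll_\epsilon m^{\epsilon}$ to produce the exponent $m^{1/3+\epsilon}$ displayed there, and note that for $m$ with a bounded number of prime divisors the factor is $O(1)$ and may be dropped outright, so that the clean form $E(x,m)\ll x^{2/3}+m^{1/3+\epsilon}x^{1/3}$ holds for every $\epsilon>0$, in agreement with the closing remark of the section.
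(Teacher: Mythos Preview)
Your argument is correct and is precisely the derivation the paper has in mind: substitute $\theta=0$ in Theorem~\ref{main}, note that the second range $m^{3/2}\le x<m^{3/2}$ is vacuous, merge the first and third into the single bound $x^{2/3}$ for $m\le x$, and read off $m^{1/3}x^{1/3}$ from the fourth range for $x<m$. Your treatment of the $m^{\epsilon}$ prefactor via the remark following Theorem~\ref{main} (replacing it by a power of $\tau(m)$ under the strong form of the conjecture and absorbing it as $m^{1/3+\epsilon}$) is exactly the bookkeeping the paper intends.
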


\begin{corollary}\label{munif}
 The asymptotic formula \eqref{S_asymp} holds for $m=m(x)$ satisfying  $m=O\big(x^{\eta}\big)$ for  $\eta<2/(1+2\theta)$. 
In particular, Ramanujan-Petersson conjecture would allow to take  any $\eta<2$.
\end{corollary}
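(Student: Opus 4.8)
The plan is to read the corollary off Theorem~\ref{main}, by showing that in each of the four ranges the error term is of smaller order than the main term. Write $M(x,m)=8\big|2^{k+1}-3\big|\sigma(m/2^k)\,x/m$ for the main term, so that $S(x,m)=M(x,m)+E(x,m)$. The first step is a uniform lower bound for $M$. Putting $m=2^ku$ with $u$ odd,
\[
 M(x,m)=8x\cdot\frac{\big|2^{k+1}-3\big|}{2^k}\cdot\frac{\sigma(u)}{u},
\]
and since $\sigma(u)/u\ge 1$ while an elementary check gives $\big|2^{k+1}-3\big|/2^k\ge 1/2$ (equality only at $k=1$), we get $M(x,m)\gg x$ uniformly in~$m$. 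Thus \eqref{S_asymp} amounts to $E(x,m)=o(x)$, and it suffices to prove this.

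Next I would distinguish $m\le x$ from $m>x$. For $m\le x$ we are in one of the first three ranges of Theorem~\ref{main}; estimating $m\le x$ (and $m^\epsilon\le x^\epsilon$) the three bounds read $x^{2/3+\epsilon}$, $x^{(3+2\theta)/4+\epsilon}$ and $x^{(2+2\theta)/3+\epsilon}$ respectively. Because $\theta<1/2$ is known (indeed $\theta\le 7/64$), every exponent is $<1$ once $\epsilon$ is small, so $E(x,m)=o(x)$; the hypothesis on $\eta$ is not used here.

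The assumption on $\eta$ is needed only in the remaining range $x<m$, which is therefore the binding case. Here Theorem~\ref{main} gives $E(x,m)\ll m^{(1+2\theta)/3+\epsilon}x^{1/3}$, and substituting $m\ll x^\eta$ yields $E(x,m)\ll x^{\eta(1+2\theta)/3+1/3+\eta\epsilon}$. The exponent is $<1$ precisely when $\eta(1+2\theta)<2$, that is $\eta<2/(1+2\theta)$; the strictness of this inequality provides the room to absorb the $\eta\epsilon$ by taking $\epsilon$ small. Collecting both cases, $E(x,m)\ll x^{1-\delta}$ for some fixed $\delta>0$, so $E(x,m)/M(x,m)\ll x^{-\delta}\to 0$, which is exactly \eqref{S_asymp}; setting $\theta=0$ recovers the range $\eta<2$ under Ramanujan--Petersson.

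Since the theorem does the analytic work, this is mostly bookkeeping and I do not anticipate a real difficulty. The two points to watch are the uniform lower bound $M(x,m)\gg x$, which rests on the estimate for $\big|2^{k+1}-3\big|/2^k$ and rules out a large $2$-part of $m$ depressing the main term, and the tracking of the $m^\epsilon$ factors, harmless thanks to the strict inequality defining the admissible $\eta$. The one truly constraining regime is $x<m$, where no case hypothesis bounds $m$ by a power of $x$ and one must invoke $m=O(x^\eta)$ itself.
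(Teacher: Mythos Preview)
Your argument is correct, and the paper does not write out a separate proof of this corollary; it is stated as an immediate consequence of Theorem~\ref{main}. Your verification---showing $M(x,m)\gg x$ uniformly via the elementary bound $|2^{k+1}-3|/2^k\ge 1/2$, and then checking in each of the four ranges that the error is $O(x^{1-\delta})$, with the fourth range $x<m$ being the only one that actually uses the hypothesis $m=O(x^\eta)$---is exactly the routine bookkeeping the paper leaves implicit.
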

\smallskip

Unfortunately with our present knowledge about $\theta$ we cannot rule out the possibility of $\theta$ belonging  to a very thin interval of length $1/192$ in which one can carry out a different optimization getting a slight improvement in a small range. To keep the statement simpler we did not include it in Theorem~\ref{main}. 

\begin{theorem}\label{maini}
 If $\theta\in (5/48, 7/64]$, for $m^{11(1-4\theta)/7}<x<m^{\min(92\theta-5,\,46\theta)/5}$ the bound in Theorem~\ref{main} can be improved to 
 \[
  E(x,m)
    \ll
  x^{17/23+\epsilon}
  +
  (mx)^{17/46+\epsilon}
  +
  m^{(13+4\theta)/28}x^{1/4+\epsilon}
 \]
 for every $\epsilon>0$.
\end{theorem}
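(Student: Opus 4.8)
The plan is to return to the spectral expansion of $E(x,m)$ already assembled for Theorem~\ref{main} and merely re-optimize the estimation of the discrete spectral sum; the continuous (Eisenstein) contribution is controlled by moments of $\zeta$ and the divisor sums $\sigma_{it}(m)$ and stays below the three target quantities throughout the narrow window at issue. Writing the discrete part schematically as
\[
\sum_{j}\rho_j(1)\,\lambda_j(m)\,g(t_j;x,m),
\]
where $g$ is the Bessel-type transform of the test function, supported up to a cutoff $T_0=T_0(x,m)$ and decaying like a fixed negative power of $t_j$, the task is to estimate this sum more efficiently than the uniform bound underlying Theorem~\ref{main}.

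I would split the spectral parameter at a level $U$ and handle the two pieces by different tools. On the low range $t_j\le U$ I would insert the pointwise bound $|\lambda_j(m)|\ll m^{\theta+\epsilon}$ and sum the weights by Weyl's law; this piece carries the dependence on $\theta$ and produces the term $m^{(13+4\theta)/28}x^{1/4+\epsilon}$. On the high range $U<t_j\le T_0$ I would drop the individual bound and apply Cauchy--Schwarz together with the Deshouillers--Iwaniec spectral large sieve $\sum_{t_j\le T}|\lambda_j(m)|^2\ll(T^2+m)^{1+\epsilon}$. The two terms $T^2$ and $m$ of this inequality, separated by the threshold $t_j\sim m^{1/2}$, yield after insertion of the size of $g$ the two $\theta$-free contributions $x^{17/23+\epsilon}$ (from the $T^2$ regime $t_j\gtrsim m^{1/2}$) and $(mx)^{17/46+\epsilon}$ (from the $m$ regime $U<t_j\lesssim m^{1/2}$).

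Balancing the low-range contribution against the large-sieve contribution fixes $U$, and the resulting estimate improves on Theorem~\ref{main} only inside a restricted window. Comparing the new bound with the relevant branch $m^{2\theta/3}x^{2/3}$ of Theorem~\ref{main} near $x\asymp m$ shows that $\theta$ must be large enough for $x^{17/23}$ to beat $m^{2\theta/3}x^{2/3}$; carrying this out honestly, the two endpoint conditions $x>m^{11(1-4\theta)/7}$ and $x<m^{\min(92\theta-5,\,46\theta)/5}$ are exactly the inequalities under which the re-optimized sum dominates. A short computation shows this window is nonempty precisely when $\theta>5/48$, while $\theta\le 7/64$ is the Kim--Sarnak record, which is why the phenomenon is confined to an interval of length $1/192$.

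The main obstacle is not any single inequality but the simultaneous bookkeeping across these ranges: the transform $g$ changes shape as $t_j$ crosses the natural scales, so the large-sieve piece must be run dyadically with the amplitude exponents tracked carefully so that the crossover $t_j\sim m^{1/2}$ delivers cleanly the pair $x^{17/23}$ and $(mx)^{17/46}$ rather than a weaker hybrid. One must also check that the optimal cutoff $U$ respects $U\le T_0$ throughout the window and that the marginal gain survives the unavoidable $m^{\epsilon}$ losses—the delicacy of this last point being the reason the improvement is so slight.
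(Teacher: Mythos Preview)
Your sketch has the right flavor---two competing bounds for the spectral tail, one carrying $\theta$ and one not---but it misidentifies both the shape of the spectral expansion and the key analytic input, and as written it cannot produce the exponents $17/23$, $17/46$, $(13+4\theta)/28$.

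First, the discrete spectral sum here is \emph{not} of Kuznetsov type with weight $\rho_j(1)$. Because $S(x,m)$ is interpreted as a hyperbolic lattice-point count, the expansion (Proposition~\ref{spec}) reads $\sqrt{m}\sum_j \lambda_j(m)\,h(t_j)\,|u_j(z_0)|^2$, with the special value $|u_j(z_0)|^2$ in place of any Fourier coefficient. Consequently the object to control in dyadic blocks is $\mathcal{S}(m,T)=\sum_{T\le|t_j|<2T}|\lambda_j(m)|\,|u_j(z_0)|^2$, and after Cauchy one needs a bound for $\sum|u_j(z_0)|^4$, which neither Weyl's law nor the Deshouillers--Iwaniec large sieve supplies.

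The missing ingredient is the Iwaniec--Sarnak sup-norm bound $|u_j(z_0)|\ll |t_j|^{5/12+\epsilon}$ (Lemma~\ref{Linf}). Combined with the local Weyl law $\sum_{t_j\sim T}|u_j(z_0)|^2\ll T^2$ and the Kuznetsov-type mean square $\sum_{t_j\sim T}|\lambda_j(m)|^2\ll T^{2+\epsilon}+m^{1/2+\epsilon}$ (Lemma~\ref{kuznetsov}), it yields the $\theta$-free estimate $\mathcal{S}(m,T)\ll T^{17/12+\epsilon}(T+m^{1/4})$ of Lemma~\ref{KLinf}; every occurrence of $17$ in the target bound is a shadow of the exponent $5/12$. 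Note also that the large-sieve threshold is $T\sim m^{1/4}$, not $t_j\sim m^{1/2}$, because the second moment has $m^{1/2}$ rather than $m$.

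The paper then does not split at a level $U$; it takes the \emph{minimum} of this $\theta$-free bound and the bound $\mathcal{S}(m,T)\ll m^{\theta+\epsilon}T^2$ from Lemma~\ref{newL}, inserts $\sqrt{m}\,H(T)$ with a specific choice of the smoothing parameter $\Delta$ (namely $\Delta=x^{-6/23}$ for $m<x$ and $\Delta^{-1}=(mx)^{3/23}$ for $m\ge x$), and checks that the piecewise-power expression in $T$ attains its supremum at the crossover points $T_0=m^{1/4}$, $T_1=m^{3(1-4\theta)/7}$, $T_2=\Delta^{-1}$, $T_3=m^{12\theta/5}$. The three terms in the statement arise from $T_2$, $T_2$ (in the $m\ge x$ regime), and $T_1$ respectively; the range conditions $x>m^{11(1-4\theta)/7}$ and $x<m^{\min(92\theta-5,46\theta)/5}$ come from making the $T_0$ and $T_3$ contributions subordinate. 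Without Lemma~\ref{Linf} your high-range Cauchy step has no handle on $\sum|u_j(z_0)|^4$ and the argument does not close.
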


To put into perspective the numerical size of the improvement of Theorem~\ref{main} in the range indicated in Theorem~\ref{maini}, we mention that 
the maximal gain is $x^{4/1137}$ attached only when $\theta=7/64$ and $m=x^{1232/1137}$. For instance for $m=x$ the saving is at most $x^{1/2208}$.
Note that the interval for $x$ collapses as $\theta\to 5/48$.

When we substitute the best known upper bound for $\theta$, due to H.~Kim and Sarnak (see Lemma~\ref{kisal} below), Theorem~\ref{main} and Theorem~\ref{maini} show:

\begin{corollary}\label{submain}
 With the notation as before
 \[
  m^{-\epsilon}E(x,m)\ll 
  \begin{cases}
   x^{2/3} & \text{if } m^{117/64}\le x,
   \\
   m^{39/128}x^{1/2} & \text{if } m^{89/64}\le x<m^{117/64},
   \\
   m^{7/96}x^{2/3} & \text{if } m^{161/160}\le x<m^{89/64},
   \\
   x^{17/23} & \text{if } m\le x<m^{161/160},
   \\
   (mx)^{17/46} & \text{if } m^{1137/1232}\le x<m,
   \\
   m^{215/448}x^{1/4} & \text{if } m^{99/112}\le x<m^{1137/1232},
   \\
   m^{13/32}x^{1/3} & \text{if } x<m^{99/112}.
  \end{cases}
 \]
\end{corollary}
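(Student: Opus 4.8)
The plan is to treat Corollary \ref{submain} as a bookkeeping consequence of Theorems \ref{main} and \ref{maini}: one substitutes the Kim--Sarnak value $\theta=7/64$ from Lemma \ref{kisal} into every exponent and every range endpoint, and then, for each position of $x$ relative to the powers of $m$, selects the smallest of the competing monomials. First I would process Theorem \ref{main}. At $\theta=7/64$ the boundary exponents $3/2+3\theta$ and $3/2-\theta$ become $117/64$ and $89/64$, while the coefficient exponents $(1+2\theta)/4$, $2\theta/3$ and $(1+2\theta)/3$ become $39/128$, $7/96$ and $13/32$. This already furnishes the bounds $x^{2/3}$ for $x\ge m^{117/64}$, $m^{39/128}x^{1/2}$ for $m^{89/64}\le x<m^{117/64}$, $m^{7/96}x^{2/3}$ for $m\le x<m^{89/64}$, and $m^{13/32}x^{1/3}$ for $x<m$; these account for the first, second, third (restricted to $x\ge m^{161/160}$) and seventh cases of the corollary, so that Theorem \ref{maini} is needed only on the window $m^{99/112}<x<m^{161/160}$.

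Next I would evaluate Theorem \ref{maini} at $\theta=7/64$. Its hypothesis becomes $m^{99/112}<x<m^{161/160}$, using $11(1-4\theta)/7=99/112$ and $\min(92\theta-5,46\theta)/5=46\theta/5=161/160$ (the branch $46\theta$ being the binding one here since $46\theta<92\theta-5$ at this $\theta$), and the three competing terms are $x^{17/23}$, $(mx)^{17/46}$ and $m^{(13+4\theta)/28}x^{1/4}=m^{215/448}x^{1/4}$. Locating the crossovers by equating consecutive terms gives $x=m$ from $x^{17/23}=(mx)^{17/46}$ and the exotic abscissa $x=m^{1137/1232}$ from $(mx)^{17/46}=m^{215/448}x^{1/4}$, which split the window into the fourth, fifth and sixth cases. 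As a consistency check I would confirm that neighbouring pieces agree at each shared endpoint---the two bounds equal $m^{39/32}$ at $x=m^{117/64}$, $m$ at $x=m^{89/64}$, $m^{17/23}$ at $x=m$, and $m^{157/224}$ at $x=m^{99/112}$---so that the seven monomials glue into a continuous envelope.

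Two points must be dispatched before the substitution is legitimate, and this is where the real work lies. First, $7/64$ is only an upper bound for the true $\theta$, so I must check that inserting it cannot understate the error: each bound in Theorem \ref{main}, and the $m$-power in Theorem \ref{maini}, is non-decreasing in $\theta$ (the variable enters the $m$-exponents with a positive sign, and enlarging $\theta$ merely contracts the range on which the pure $x^{2/3}$ bound is claimed, pushing $(x,m)$ into a larger neighbouring bound), so replacing the true exponent by $7/64$ can only weaken the estimate. Second, and more delicately, Theorem \ref{maini} is available only for $\theta\in(5/48,7/64]$, whereas unconditionally nothing is known below $5/48$; I would resolve this by a dichotomy. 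If $\theta>5/48$ the theorem applies and monotonicity finishes the argument. If instead $\theta\le 5/48$, then Theorem \ref{maini} is vacuous (indeed its $x$-window collapses as $\theta\to 5/48$), but Theorem \ref{main} with the true $\theta$ already yields a bound no larger than the stated one throughout $m^{99/112}<x<m^{161/160}$; by monotonicity it suffices to verify this at $\theta=5/48$, and there, since a quotient of two monomials is monotone in $x$, only the breakpoints $x=m^{99/112},m^{1137/1232},m,m^{161/160}$ need be examined, comparing $m^{5/72}x^{2/3}$ and $m^{29/72}x^{1/3}$ against $x^{17/23}$, $(mx)^{17/46}$ and $m^{215/448}x^{1/4}$. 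I expect this $\theta\le 5/48$ branch to be the main obstacle: the comparison is genuine rather than formal, and at $x=m^{1137/1232}$ it is razor-thin, reflecting that $5/48$ is precisely the threshold at which the Maass-form input of Theorem \ref{maini} ceases to improve on Theorem \ref{main}.
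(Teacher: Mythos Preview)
Your proposal is correct and follows the paper's approach: the corollary is obtained by substituting $\theta_0=7/64$ from Lemma~\ref{kisal} into Theorems~\ref{main} and~\ref{maini} and reading off the breakpoints, exactly as you do. The paper gives no further proof.

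Your extra layer of care---the dichotomy on whether the true $\theta$ lies in $(5/48,7/64]$---is a legitimate response to the literal wording of Theorem~\ref{maini}, and the endpoint checks you propose (at $x=m^{99/112}$, $m^{1137/1232}$, $m$, $m^{161/160}$) do go through. However, it can be avoided. The remark immediately after Theorem~\ref{main} states that one may replace $\theta$ by any upper bound $\theta_0$; the same is true of Theorem~\ref{maini}, since its proof (Proposition~\ref{impr}) rests only on Lemmas~\ref{newL} and~\ref{KLinf}, whose bounds are non-decreasing in $\theta$, together with inequalities of the form ``$\theta\ge 5/48$'' that hold for $\theta_0=7/64$. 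Hence Theorem~\ref{maini} is valid unconditionally with $\theta$ replaced by $7/64$ on the fixed window $m^{99/112}<x<m^{161/160}$, and the case $\theta\le 5/48$ never arises. This is the reading the paper intends; it costs nothing beyond a glance at the proof of Proposition~\ref{impr} and makes your ``razor-thin'' comparison at $x=m^{1137/1232}$ unnecessary.
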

In the first of the following figures it is represented the graph of the piecewise linear function $\beta= \beta(\alpha)$ 
such that 
$x^{-\epsilon}E(x,x^{1/\alpha})\ll x^\beta$ is the corresponding bound in   Corollary~\ref{submain}. 
The vertical dashed lines mark  the change from a linear function to another. 
The second figure shows the detail of the range in which the improvement of Theorem~\ref{maini} applies. The thinner line represents the result from Theorem~\ref{main} not taking into account this improvement. 
\begin{center}
 \begin{tabular}{c}
  \includegraphics[height=123pt]{./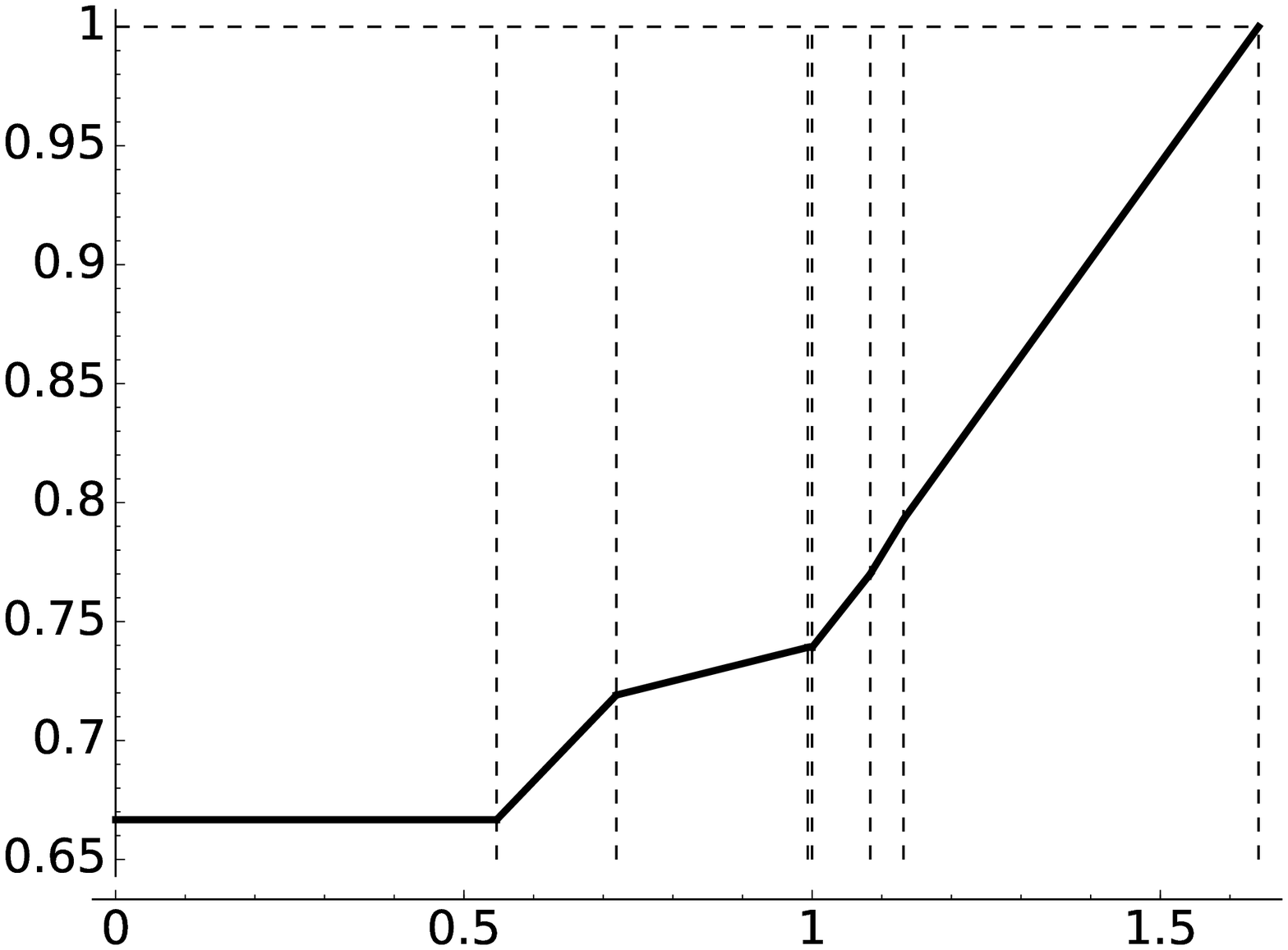}
 \end{tabular}
 \ 
 \begin{tabular}{c}
  \includegraphics[height=123pt]{./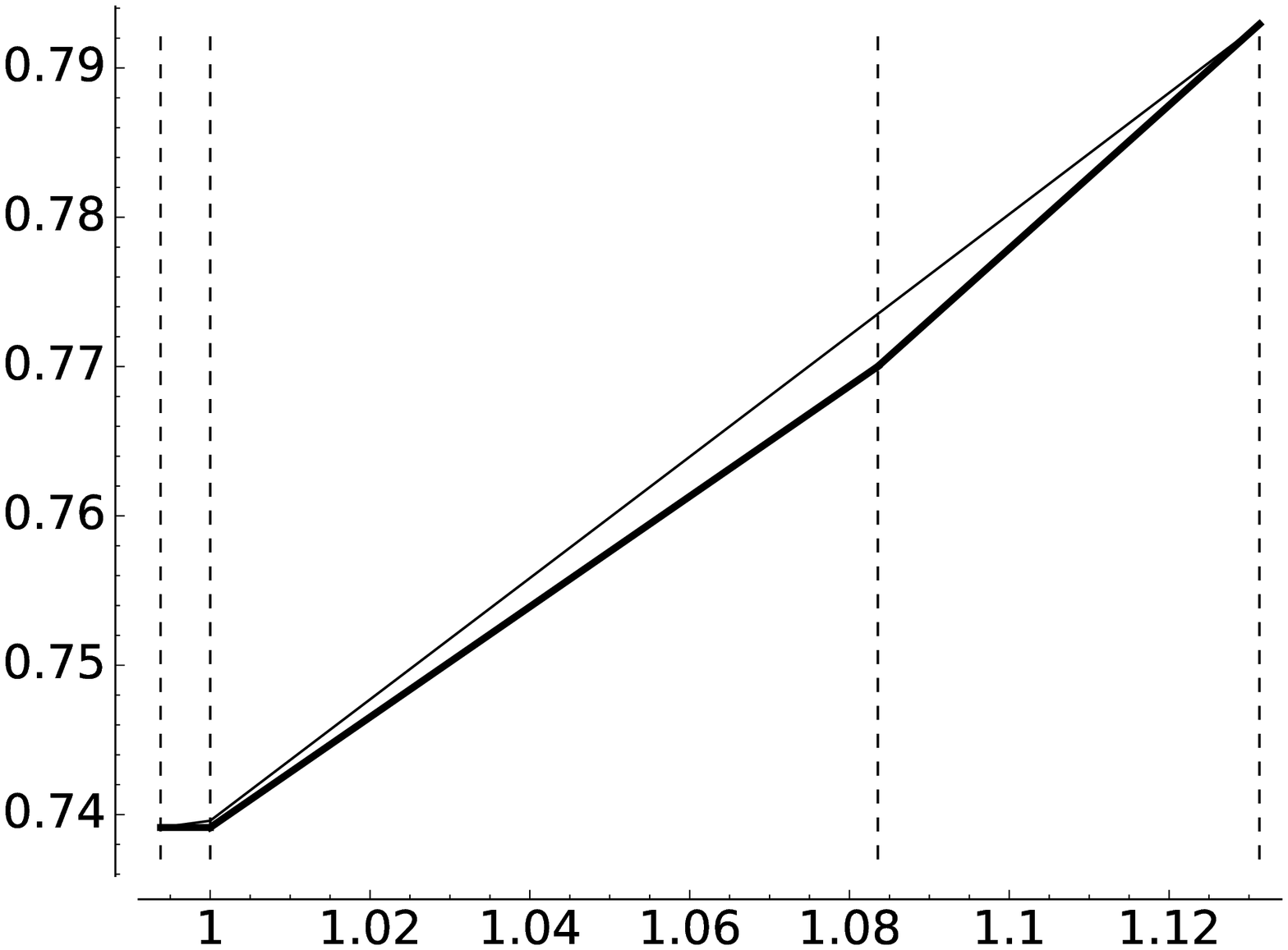}
 \end{tabular}
\end{center}

Corollary~\ref{submain} allows to improve the uniformity achieved in \cite[Cor.5.3]{chamizo_c}.

\begin{corollary}\label{c64_39}
 The asymptotic formula \eqref{S_asymp} holds for $m=m(x)$ satisfying $m=O\big(x^{\eta}\big)$ for  $\eta<64/39$. 
\end{corollary}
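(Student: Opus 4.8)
The plan is to deduce Corollary~\ref{c64_39} directly from Corollary~\ref{submain}. The asymptotic \eqref{S_asymp} is equivalent to $E(x,m)=o\big(M(x,m)\big)$, where $M(x,m)=8\big|2^{k+1}-3\big|\sigma(m/2^k)\,x/m$ denotes the main term, so the first step is a clean lower bound for $M$. Using $\sigma(n)\ge n$ together with the elementary estimate $\big|2^{k+1}-3\big|\gg 2^k$ (the cases $k=0,1$ by inspection, and $2^{k+1}-3\ge 2^k$ for $k\ge 2$) gives $\big|2^{k+1}-3\big|\,\sigma(m/2^k)\gg 2^k\cdot(m/2^k)=m$, whence $M(x,m)\gg x$ with an absolute constant and, crucially, no loss of any power of $m$. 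It therefore suffices to prove $E(x,m)=o(x)$.

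Next I would observe that, for fixed $x$, each piecewise bound in Corollary~\ref{submain} has the form $m^{a}x^{b}$ with $a\ge 0$, and the pieces match continuously at the breakpoints (for instance at $x=m^{99/112}$ the two neighbouring bounds both equal $m^{157/224}$). Hence the bound for $E$ is nondecreasing in $m$, and the worst case over $m\le x^{\eta}$ is attained at $m\asymp x^{\eta}$. The most restrictive range is the bottom one, $x<m^{99/112}$, that is $m>x^{112/99}$, where Corollary~\ref{submain} gives $m^{-\epsilon}E\ll m^{13/32}x^{1/3}$; for $\eta<64/39$ near the threshold, $m\asymp x^{\eta}$ lands precisely here.

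The key computation is the threshold inequality. Since $(13/32)\cdot(64/39)+1/3=2/3+1/3=1$, one has $m^{13/32}x^{1/3}\ll x^{1-\delta}$ exactly when $m\ll x^{64/39-\delta'}$. Thus, fixing $\eta<64/39$ and assuming $m=O(x^{\eta})$, I may pick $\epsilon>0$ small enough that $(13/32+\epsilon)\eta+1/3<1$ (possible by the strict inequality), obtaining $E(x,m)\ll m^{13/32+\epsilon}x^{1/3}\ll x^{(13/32+\epsilon)\eta+1/3}=o(x)$. Together with $M(x,m)\gg x$ this yields $E/M\to 0$, that is, \eqref{S_asymp}. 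For completeness one checks that the ranges $x\le m\le x^{112/99}$ (bounds $(mx)^{17/46}$ and $m^{215/448}x^{1/4}$) and $m\le x$ produce errors that are comfortably $o(x)$, so they impose no constraint stronger than $\eta<64/39$.

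The only genuinely delicate point is the interplay between the factor $m^{\epsilon}$ in the error bound and the absence of arithmetic loss in $M(x,m)$: it is precisely because $M(x,m)\gg x$ with no $m^{-\epsilon}$ factor that the strict inequality $\eta<64/39$ suffices to absorb $m^{\epsilon}$. Beyond this bookkeeping I expect no obstacle, because the improvement of Theorem~\ref{maini} (hence the sharpened pieces of Corollary~\ref{submain}) operates only in the window $m^{99/112}<x<m^{161/160}$, which is disjoint from the binding range $x<m^{99/112}$. The uniformity exponent $64/39$ is therefore controlled entirely by the bottom bound $m^{13/32}x^{1/3}$, equivalently by the last case of Theorem~\ref{main} evaluated at the Kim--Sarnak value $\theta=7/64$.
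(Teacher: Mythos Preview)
Your argument is correct and is exactly the derivation the paper intends: the sentence preceding the corollary says that it follows from Corollary~\ref{submain}, and you have carefully unpacked that implication by lower-bounding the main term by $\gg x$ and checking that the binding piece $m^{13/32}x^{1/3}$ is $o(x)$ precisely for $\eta<64/39$.

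One remark on economy: the paper offers an even shorter route via Corollary~\ref{munif}, which already states uniformity for $\eta<2/(1+2\theta)$; substituting the Kim--Sarnak bound $\theta\le 7/64$ from Lemma~\ref{kisal} gives $2/(1+7/32)=64/39$ in one line, with no need for the piecewise continuity/monotonicity check across the seven ranges of Corollary~\ref{submain}. Your route through Corollary~\ref{submain} reaches the same endpoint because the last case there is nothing but the last case of Theorem~\ref{main} with $\theta=7/64$, and $(1+2\theta)/3=13/32$; the extra bookkeeping you do (matching at breakpoints, monotonicity in $m$) is sound but not strictly necessary once one observes that the threshold is governed solely by that last case.
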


\section{Preliminary results}\label{prelim}

The aim of this section is to introduce some notation and to state Proposition~\ref{spec} that embodies all the information about the spectral expansion of $S(x,m)$. 
We also recall some known results about the Hecke eigenvalues and we finish with some comments about the spectral meaning of~\eqref{S_asymp}.
The material of this section is essentially included in \cite{chamizo_c} and it is recalled here for the sake of completeness and to fix the notation. 
\medskip

Consider the set 
\[
 \mathcal{C}(x,m)=
 \big\{
 (a,b,c,d)\in\Z^4\,:\,
 c^2+d^2\le x
 \text{ and }
 a^2+b^2-c^2-d^2=m
 \big\}.
\]
It is plain that $S(x,m)$ in \eqref{S_def} and $\#\mathcal{C}(x,m)$ coincide. Consider also the allied quantity
\[
 A(x,m)
 =
 \#\big\{ (a,b,c,d)\in\mathcal{C}(x,m)
 \,:\, 
 2\mid a-c,\ 2\mid b-d
 \big\}.
\]

For $m$ even the formula
\[
 S(x,m)
 =
 \begin{cases}
  S(x/2,m/2) &\text{if }4\nmid m,
  \\
  2A(x,m)-S(x/2,m/2) &\text{if }4\mid m
 \end{cases}
\]
is elementary (see the proof of Lemma~2.2 in \cite{chamizo_c} for a detailed discussion). Clearly if $8\mid m$ then the last $S(x/2,m/2)$ can be reduced once more. In general we have

\begin{lemma}\label{SA}
 If $m$ is even and $2^k\mid m$ with $k$ maximal then 
 \[
  S(x,m)
  =
  2\sum_{j=0}^{k-1}
  (-1)^j
  \widetilde{A}\big( x/2^j, m/2^j \big)
 \]
 where $\widetilde{A}(x,m)=A(x,m)$ if $4\mid m$ and 
 $\widetilde{A}(x,m)=\frac 12 S(x/2,m/2)$ otherwise.
\end{lemma}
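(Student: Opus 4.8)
The plan is to argue by induction on $k$, the $2$-adic valuation of $m$, using the elementary recursion displayed just before the statement as the engine. The base case is $k=1$: here $m$ is even but $4\nmid m$, so the recursion gives $S(x,m)=S(x/2,m/2)$, while the right-hand side of the claimed identity reduces to the single term $2\widetilde{A}(x,m)$. Since $4\nmid m$ the definition of $\widetilde{A}$ yields $2\widetilde{A}(x,m)=S(x/2,m/2)$, matching $S(x,m)$; this disposes of the base case and simultaneously confirms that the convention $\widetilde{A}(x,m)=\frac12 S(x/2,m/2)$ for $4\nmid m$ is exactly what is needed to absorb the terminal odd-valuation step into the same formula.

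For the inductive step I would take $k\ge 2$, so that $4\mid m$, and apply the second branch of the recursion to write $S(x,m)=2A(x,m)-S(x/2,m/2)$. The number $m/2$ has $2$-adic valuation $k-1\ge 1$, so the induction hypothesis applies and expands $S(x/2,m/2)$ as $2\sum_{j=0}^{k-2}(-1)^j\widetilde{A}(x/2^{j+1},m/2^{j+1})$. After the shift $i=j+1$ this becomes $-2\sum_{i=1}^{k-1}(-1)^i\widetilde{A}(x/2^i,m/2^i)$, so the minus sign in the recursion is precisely what restores the alternating sign in the target sum.

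It then remains to check that the leading term $2A(x,m)$ is the missing $i=0$ contribution. This is where the second clause of the definition of $\widetilde{A}$ enters: because $4\mid m$ we have $\widetilde{A}(x,m)=A(x,m)$, so $2A(x,m)=2(-1)^0\widetilde{A}(x/2^0,m/2^0)$, and combining with the reindexed sum gives $S(x,m)=2\sum_{i=0}^{k-1}(-1)^i\widetilde{A}(x/2^i,m/2^i)$, as required.

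I do not expect any genuine obstacle here: the statement is purely elementary once the recursion is granted, and the only thing demanding care is the bookkeeping of the index shift together with the two cases in the definition of $\widetilde{A}$, making sure the $4\nmid m$ convention matches the base case and the $4\mid m$ convention matches the $j=0$ term. The single point worth stating cleanly is that the $2$-adic valuation strictly decreases under $m\mapsto m/2$, which guarantees that the induction terminates exactly when the remaining factor becomes odd.
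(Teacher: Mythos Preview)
Your proof is correct and follows exactly the route the paper indicates: it iterates the displayed recursion $S(x,m)=S(x/2,m/2)$ or $S(x,m)=2A(x,m)-S(x/2,m/2)$ until the $2$-part of $m$ is exhausted, which is precisely what the paper means by ``can be reduced once more'' and by citing \cite[Lemma~2.2]{chamizo_c}. The induction on the $2$-adic valuation $k$ just formalizes this iteration, and the bookkeeping you describe is accurate.
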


This is a re-formulation of the first part of \cite[Lemma~2.2]{chamizo_c}. 
It implies that to deal with \eqref{S_asymp} it is enough to consider the
asymptotic behavior of $S(x,m)$ for $2\nmid m\in\Z^+$ and that of $A(4x,4m)$ for $m\in\Z^+$.
It turns out that both expressions have a similar spectral expansion.  This is the content of the following proposition that summarizes \S2 of 
\cite{chamizo_c} (see this reference for the proof).

\begin{proposition}\label{spec}
 The quantities
 $A(4x,4m)$ for $m$ a positive integer
 and
 $S(x,m)$ for $2\nmid m\in\Z^+$
 admit spectral expansions of the form 
 \[
  2\sqrt{m}\sum_j \lambda_j(m) h(t_j)|u_j(z_0)|^2
  +
  \frac{\sqrt{m}}{2\pi}
  \sum_{\mathfrak{a}}
  \int_{\R}
  \eta_{t}(m)
  h(t)
  \big|E_{\mathfrak{a}}(z_0,\frac{1}{2}+it)\big|^2\; dt
 \]
 where $\{u_j\}$ is a complete collection of normalized Maass-Hecke waveforms (including the constant eigenfunction) with Hecke eigenvalues $\lambda_j(m)$, $E_{\mathfrak{a}}$ are the Eisenstein series and $\eta_{t}(m)$ their respective Hecke eigenvalues. Here the underlying Fuchsian group $\Gamma$ and the point $z_0$ are 
 $\Gamma=\text{\rm PSL}_2(\Z)$, $z_0=i$ for $A$ 
 and 
 $\Gamma=\Gamma_0(2)/\{\pm \text{\rm Id}\}$, $z_0=(i-1)/2$ for $S$. In both cases, writing $y=x/m$, the function $h$ satisfies 
 \begin{equation}\label{hi2}
  h(i/2)=4\pi y 
  +
  O\big(  (y+y^{1/2})\Delta\big)
  \qquad\text{and}\qquad
  h(t)\ll H(t)\quad\text{for }t\in\R
 \end{equation}
 where $\Delta$ is an arbitrarily chosen number
  $0<\Delta<\min(1,y^{1/2})$, 
 \[
  H(t)= 
  y(1+yt^2)^{-3/4}
   \min\big(1, (\Delta|t|)^{-3/2}\big)
   \quad\text{if }0<y\le 1
 \]
 and 
 \[
  H(t)= 
  y^{1/2}(1+|t|)^{-3/2}
   \min\big(1, (\Delta|t|)^{-3/2}\big)
   \quad\text{if }y> 1\text{ and }|t|\ge 1.
 \]
 For $y>1$, $|t|<1$ the latter formula still holds multiplying the right hand side by $\log(2y)$.
\end{proposition}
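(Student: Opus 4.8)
The plan is to realize both $A(4x,4m)$ and $S(x,m)$ as counts of hyperbolic lattice points weighted by a Hecke correspondence, and then feed these into the Selberg pre-trace formula. First I would parametrize the integral solutions of $a^2+b^2-c^2-d^2=4m$ (respectively $=m$ for odd $m$ in $S$). Writing $\alpha=a+bi$, $\beta=c+di$, the relation $N(\alpha)-N(\beta)=m'$ together with the cutoff $N(\beta)\le x$ cuts out lattice points on a one-sheeted hyperboloid. The indefinite form is invariant under a natural $\mathrm{PSL}_2(\R)$-action (a real form of $\mathrm{O}(2,2)$), and the solution set decomposes into finitely many $\Gamma$-orbits whose representatives are governed by the divisors of $m'$; this is exactly where the $m$-th Hecke correspondence, and hence the eigenvalues $\lambda_j(m)$ and $\eta_t(m)$, enters. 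The base point $z_0$ is the fixed point attached to $\Z[i]$, namely $z_0=i$ for the full modular group and $z_0=(i-1)/2$ for $\Gamma_0(2)$, the two choices being dictated by the parity conditions defining $A$ versus the oddness of $m$ in $S$; the prefactor $2\sqrt m$ records the normalization of this orbit count.

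Second, I would encode the sharp cutoff $N(\beta)\le x$ as a radial, point-pair-invariant kernel $k\big(u(z_0,\gamma z_0)\big)$, so that the weighted count becomes the automorphic kernel $K(z_0,z_0)=\sum_{\gamma\in\Gamma}k\big(u(z_0,\gamma z_0)\big)$ composed with the Hecke operator of index $m$. Applying the spectral expansion of $K$ then produces precisely the discrete sum $2\sqrt m\sum_j\lambda_j(m)h(t_j)|u_j(z_0)|^2$ and the matching Eisenstein integral, with $h$ the Selberg/Harish-Chandra transform of $k$. Since the raw cutoff is not smooth, I would first regularize it at scale $\Delta$ by majorizing and minorizing the hyperbolic disk by smooth radial functions; this is what introduces the arbitrary parameter $0<\Delta<\min(1,y^{1/2})$ and the smoothing gain $\min\big(1,(\Delta|t|)^{-3/2}\big)$ present in $H(t)$.

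Third, the analytic core is to evaluate $h$. The value $h(i/2)$ corresponds to the eigenvalue $0$, i.e.\ the constant eigenfunction, and carries the main term: a direct computation of the Selberg transform of the disk indicator gives $h(i/2)=4\pi y+O\big((y+y^{1/2})\Delta\big)$, the error being the boundary layer created by the regularization. Combined with the Hecke eigenvalue at that point (which is of size $\sigma(m/2^k)/\sqrt m$ up to the elementary powers-of-$2$ bookkeeping encoded in Lemma~\ref{SA}) and the factor $2\sqrt m$, this reproduces the coefficient $8\big|2^{k+1}-3\big|\sigma(m/2^k)x/m$ of \eqref{S_asymp}. For real $t$ I would write $h(t)$ as an oscillatory integral (a Fourier--Bessel transform of $k$) and apply stationary phase: the decay $(1+yt^2)^{-3/4}$ for $0<y\le 1$ and the shape $y^{1/2}(1+|t|)^{-3/2}$ for $y>1$ are the resulting decay rates, while the logarithmic loss in the range $y>1$, $|t|<1$ reflects the absence of a genuine stationary point there.

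The step I expect to be the main obstacle is this last one: establishing the uniform bound $h(t)\ll H(t)$ with the exact exponents and, crucially, with simultaneous control in the two independent parameters $t$ and $y$ as well as in the smoothing scale $\Delta$. Getting the transition between the regimes $y\le 1$ and $y>1$ right, and then justifying the convergence of the spectral expansion on the diagonal $z=w=z_0$ with these decay rates so that the Hecke-weighted series $2\sqrt m\sum_j\lambda_j(m)h(t_j)|u_j(z_0)|^2$ is well defined, is the delicate part. By contrast, the remaining bookkeeping---the orbit decomposition, the factor of $2$, and the identification of $\Gamma$ and $z_0$ in the two cases---is routine once the geometric dictionary is in place.
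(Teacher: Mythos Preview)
Your outline is correct and matches the approach the paper intends: the paper does not actually prove Proposition~\ref{spec} but defers to \S2 of \cite{chamizo_c}, and the method there is precisely what you describe---interpret $S(x,m)$ (resp.\ $A(4x,4m)$) as a hyperbolic lattice-point count at the special point $z_0$, compose with the $m$-th Hecke correspondence, smooth the indicator at scale~$\Delta$, and expand the resulting automorphic kernel via the pre-trace formula, with the bounds on $h(t)$ coming from a stationary-phase analysis of the Selberg/Harish-Chandra transform. Your identification of the uniform estimate $h(t)\ll H(t)$ across the regimes $y\le 1$ and $y>1$ as the analytic crux is also accurate.
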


We recall that in this context the Hecke operators are given by 
\begin{equation}\label{Heop}
 T_m f(z)
 =
 \frac{1}{\sqrt{m}}
 \sum_{ad=m}
 \sum_{b=0}^{d-1}
 f\Big(\frac{az+b}{d}\Big)
\end{equation}
and we have
\begin{equation}\label{HeMa}
 T_m u_j(z)
 =
 \lambda_j(m)u_j(z)
 \qquad\text{and}\qquad
 T_m E_{\mathfrak{a}}(z,1/2+it)
 =
 \eta_t(m)E_{\mathfrak{a}}(z,1/2+it).
\end{equation}
Here $\eta_t(m)=\sum_{ad=m}(a/d)^{it}$, hence $|\eta_t(m)|\le \tau(m)$. The Ramanujan-Petersson conjecture claims that $\lambda_j(m)$ is bounded in the same way and the state of the art loses a small power using profound techniques (see \cite{sarnakR} for the ideas leading to this breakthrough).

\begin{lemma}[{\cite[App.\,2]{kim}}]\label{kisal}
 We have $\theta\le 7/64$ in \eqref{lth}. In fact $|\lambda_j(p)|\le 2p^{7/64}$ for $p$ prime. 
\end{lemma}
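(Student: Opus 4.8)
The plan is to translate the statement into the language of automorphic representations and then invoke the symmetric-power functoriality that underlies the Kim--Sarnak estimate. First I would attach to the Maass--Hecke waveform $u_j$ a self-dual cuspidal automorphic representation $\pi=\otimes_v\pi_v$ of $\mathrm{GL}_2$ over $\Q$ (of the appropriate level, $1$ or $2$ according to the two cases of Proposition~\ref{spec}). At an unramified prime $p$ the local component is determined by its Satake parameters $\{\alpha_p,\alpha_p^{-1}\}$ and $\lambda_j(p)=\alpha_p+\alpha_p^{-1}$. Writing the worst case (a complementary-series parameter) as $\alpha_p=p^{\sigma}$ with $\sigma\ge 0$, one has $|\lambda_j(p)|\le p^{\sigma}+p^{-\sigma}\le 2p^{\sigma}$, so the whole lemma reduces to the purely local assertion $\sigma\le 7/64$, and the factor $2$ is accounted for automatically.

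The key inputs are the chain of symmetric-power lifts: $\mathrm{Sym}^2\pi$ is automorphic on $\mathrm{GL}_3$ (Gelbart--Jacquet), $\mathrm{Sym}^3\pi$ on $\mathrm{GL}_4$ (Kim--Shahidi), and $\mathrm{Sym}^4\pi$ on $\mathrm{GL}_5$ (Kim). Granting these, all the Rankin--Selberg convolutions $L(s,\mathrm{Sym}^k\pi\times\mathrm{Sym}^{k'}\pi)$ with $k,k'\le 4$ exist as genuine automorphic $L$-functions: they are holomorphic for $\mathrm{Re}(s)\ge 1$ apart from a simple pole at $s=1$ in the self-dual case, satisfy a functional equation, and---crucially---possess a Dirichlet series with nonnegative coefficients after taking a suitable logarithm. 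At $p$ the Satake parameters of $\mathrm{Sym}^4\pi$ are $\{p^{4\sigma},p^{2\sigma},1,p^{-2\sigma},p^{-4\sigma}\}$, which is what makes $\sigma$ visible through these $L$-functions.

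Next I would run the Luo--Rudnick--Sarnak positivity argument. Applied to $L(s,\mathrm{Sym}^4\pi\times\widetilde{\mathrm{Sym}^4\pi})$, the rightmost singularity is the simple pole at $s=1$, while the local factor at $p$ contributes an inverse root as large as $p^{4\sigma}\cdot p^{4\sigma}=p^{8\sigma}$; nonnegativity of the coefficients forces this local pole not to lie to the right of $s=1$, giving the crude bound $\sigma\le 1/8$. To sharpen $1/8=8/64$ to $7/64$ I would not use a single convolution but build an auxiliary Dirichlet series with nonnegative coefficients out of the whole family $\{\mathrm{Sym}^k\pi\times\mathrm{Sym}^{k'}\pi\}_{k,k'\le 4}$---equivalently a positive combination of their logarithms---and optimize the weights so as to extract the strongest constraint on $\sigma$ from the single pole at $s=1$. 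The archimedean parameter (i.e.\ the Laplace eigenvalue and spectral parameter $t_j$) is treated in exactly the same way, using the archimedean local factors, and yields the same exponent.

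The deepest obstacle is not the final optimization but the automorphy of $\mathrm{Sym}^4\pi$, which rests on the full Langlands--Shahidi method together with the converse theorem; without it one is stuck at the weaker bounds coming from $\mathrm{Sym}^2$ or $\mathrm{Sym}^3$. Once the functoriality chain is in hand, the passage to the numerical value $7/64$ is a finite, if delicate, positivity computation, and it holds uniformly in the level, so it applies equally to the forms for $\mathrm{PSL}_2(\Z)$ and $\Gamma_0(2)$ appearing in Proposition~\ref{spec}.
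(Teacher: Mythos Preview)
The paper gives no proof of this lemma: it is quoted as a black-box input with a bare citation to the Kim--Sarnak appendix of \cite{kim}. There is therefore nothing in the paper to compare your proposal against; what you have written is an outline of the argument in that appendix itself. At that level your sketch is faithful: pass to the automorphic representation attached to $u_j$, reduce the Hecke bound to a bound $\sigma\le 7/64$ on the Satake parameter, invoke the automorphy of $\mathrm{Sym}^k\pi$ for $k\le 4$ (Gelbart--Jacquet, Kim--Shahidi, Kim), and run a Landau-type positivity argument with the resulting Rankin--Selberg $L$-functions. The one place your description is loose is the step from $1/8$ to $7/64$: labelling it ``optimize the weights'' over the family $\{\mathrm{Sym}^k\pi\times\mathrm{Sym}^{k'}\pi\}_{k,k'\le 4}$ is correct in spirit, but in the actual appendix the value $7/64$ falls out of a specific construction (using Clebsch--Gordan decompositions to squeeze partial analytic information about higher symmetric powers out of the available pairs) rather than an abstract optimization. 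For the purposes of the present paper none of this matters: Lemma~\ref{kisal} is used only through the single inequality $\theta\le 7/64$, and Theorem~\ref{main} is deliberately stated for general $\theta$ so that any future improvement can be substituted directly.
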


An application of the Kuznetsov formula gives the following average result. For the proof of a stronger result see \cite[Cor.5.3]{ChRa} and use \cite[(8.43)]{iwaniec}.

\begin{lemma}\label{kuznetsov}
 For every $\epsilon>0$
 \[
  \sum_{T\le |t_j|<2T} 
  |\lambda_j(m)|^2
  \ll
  T^{2+\epsilon}+m^{1/2+\epsilon}.
 \]
\end{lemma}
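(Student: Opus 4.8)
The plan is to derive this average directly from the Kuznetsov trace formula for the relevant group ($\mathrm{PSL}_2(\Z)$ when we treat $A$, and $\Gamma_0(2)$ when we treat $S$; the argument is uniform and the level-$2$ congruence condition on the moduli below only helps), exploiting positivity of the spectral side together with Weil's bound for the single diagonal Kloosterman sum that arises when $n=m$.

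First I would fix an even, nonnegative, admissible test function localizing the spectrum to the dyadic window, for instance $h(t)=(t/T)^2e^{-t^2/T^2}$, which is entire, rapidly decreasing on $\R$, and satisfies $h(t)\gg 1$ for $T\le|t|\le 2T$. Inserting $h$ and the pair $(m,m)$ into the Kuznetsov formula
\[
 \sum_j \frac{|\rho_j(m)|^2}{\cosh(\pi t_j)}\, h(t_j)
 +(\text{continuous spectrum})
 =
 \frac{1}{\pi^2}\int_{\R} h(t)\, t\tanh(\pi t)\, dt
 +\sum_{c}\frac{S(m,m;c)}{c}\,\mathcal{H}\!\Big(\frac{4\pi m}{c}\Big),
\]
where $\mathcal{H}$ is the Bessel transform of $h$ and $\rho_j(n)=\rho_j(1)\lambda_j(n)$ are the Fourier coefficients, both contributions on the left are nonnegative because $h\ge 0$. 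Dropping the continuous part and restricting the discrete sum to $T\le|t_j|<2T$ (where $h\gg 1$) therefore bounds $\sum_{T\le|t_j|<2T}|\rho_j(m)|^2/\cosh(\pi t_j)$ from above by the right-hand side. The diagonal term is immediate, $\int_{\R}h(t)\,t\tanh(\pi t)\,dt\asymp T^2$, which produces the first term. To pass from $\rho_j(m)$ to $\lambda_j(m)$ I would use the harmonic-weight relation $|\rho_j(1)|^2/\cosh(\pi t_j)\asymp L(1,\mathrm{sym}^2u_j)^{-1}\gg t_j^{-\epsilon}$ recorded in \cite[(8.43)]{iwaniec}; since $t_j\asymp T$ on our window this converts the weighted $\rho_j(m)$-bound into a bound for $\sum|\lambda_j(m)|^2$ at the cost of a factor $T^\epsilon$, which is what introduces the $\epsilon$ in the final exponents.

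The heart of the matter, and the step I expect to be the main obstacle, is the Kloosterman term. Using the asymptotics of $J_{2it}$ (equivalently, a stationary-phase analysis of $\mathcal{H}$) one shows that $\mathcal{H}(4\pi m/c)$ is negligible unless $c\ll m/T$ up to a logarithm, and that in this range $\mathcal{H}(4\pi m/c)\ll T\,(c/m)^{1/2}$. Feeding in Weil's bound $|S(m,m;c)|\ll\tau(c)\,(m,c)^{1/2}c^{1/2}$ then gives
\[
 \sum_{c\ll m/T}\frac{|S(m,m;c)|}{c}\,\Big|\mathcal{H}\!\Big(\frac{4\pi m}{c}\Big)\Big|
 \ll
 \frac{T}{\sqrt m}\sum_{c\ll m/T}(m,c)^{1/2}\tau(c)
 \ll
 \frac{T}{\sqrt m}\cdot\frac{m}{T}\,m^\epsilon
 = m^{1/2+\epsilon},
\]
where the powers of $c$ cancel after dividing by $c$, and the divisor sum is estimated by splitting according to $(m,c)$. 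This yields the second term.

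Combining the diagonal contribution, the Kloosterman contribution, and the $T^\epsilon$ conversion factor gives $\sum_{T\le|t_j|<2T}|\lambda_j(m)|^2\ll T^{2+\epsilon}+m^{1/2+\epsilon}$; the finitely many exceptional eigenvalues with $|t_j|<1/2$ fall outside any window with $T\ge 1$ and cause no trouble. I would emphasize that the saving to $m^{1/2}$ is genuinely arithmetic: the black-box spectral large sieve applied to the single coefficient $a_m$ only delivers $T^2+m^{1+\epsilon}$, and it is precisely Weil's bound on the isolated diagonal Kloosterman sum that halves the exponent of $m$. This is also why the sharper statements of \cite[Cor.5.3]{ChRa} are the natural reference for the full strength of the estimate.
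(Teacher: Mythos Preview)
Your proposal is correct and follows exactly the route the paper indicates: the paper's ``proof'' is simply the instruction to take the Kuznetsov--Weil bound on Fourier coefficients from \cite[Cor.~5.3]{ChRa} and convert to Hecke eigenvalues via the Hoffstein--Lockhart type lower bound recorded in \cite[(8.43)]{iwaniec}, and your sketch reproduces precisely that standard argument (positive test function, drop the continuous spectrum, Weil on the diagonal Kloosterman sums, then remove the harmonic weight at the cost of $T^\epsilon$). The only quibble is that the truncation and pointwise size you quote for the Bessel transform are slightly off in the usual normalization---one actually has $\mathcal{H}(4\pi m/c)\ll T^2(c/m)^{1/2}$ with the sum effectively restricted to $c\ll m/T^2$---but the two discrepancies cancel and the final estimate $m^{1/2+\epsilon}$ is unaffected.
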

\medskip

For Theorem~\ref{maini} we use the main result in \cite{IwSa} (see also \cite[\S10]{BlHo} and  \cite{sarnak}).
\begin{lemma}[{\cite[(A.15)]{IwSa}}]\label{Linf}
 For every $\epsilon>0$, 
 $|u_j(z_0)|\ll |t_j|^{5/12+\epsilon}$.
\end{lemma}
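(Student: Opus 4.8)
The plan is to follow the amplification method of Iwaniec and Sarnak, which starts from a pre-trace inequality. Fix a point-pair invariant $k(z,w)$ whose Selberg transform $g$ is non-negative and bounded below on the window $|t-t_j|\le 1$; such a $k$ can be taken with $k(z_0,z_0)\ll t_j$ and with $k(z_0,w)$ negligible once the hyperbolic distance $d(z_0,w)$ exceeds $\ll t_j^{-1+\epsilon}$. The spectral expansion of the corresponding automorphic kernel, together with the positivity of $g$, yields
\[
 |u_j(z_0)|^2
 \ll
 \sum_k g(t_k)|u_k(z_0)|^2
 +\text{(Eisenstein contribution)}
 =
 \sum_{\gamma\in\Gamma} k(z_0,\gamma z_0).
\]
With no further input the geometric side is $\asymp t_j$, coming from $\gamma=\mathrm{Id}$, and this only recovers the convexity bound $|u_j(z_0)|\ll t_j^{1/2}$.

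To do better I would amplify by means of the Hecke operators \eqref{Heop}. Choose a sequence $x=(x_\ell)$ supported on primes $\ell$ of size about $L$ (with companion $\ell^2$ terms), built from the signs of the $\lambda_j(\ell)$ and the relation $\lambda_j(\ell)^2=\lambda_j(\ell^2)+1$, so that $A:=\sum_\ell x_\ell\lambda_j(\ell)$ is large relative to $\|x\|$ and the amplifier gains a fixed power of $L$ over the diagonal. Applying $\sum_\ell x_\ell T_\ell$ in both variables of the kernel, using \eqref{HeMa} spectrally and the composition of Hecke points geometrically, produces
\[
 |A|^2\,|u_j(z_0)|^2
 \ll
 \sum_{\ell,\ell'} x_\ell\overline{x_{\ell'}}
 \sum_{\substack{\gamma\in M_2(\Z)\\ \det\gamma=\ell\ell'}}
 k\Big(z_0,\tfrac{1}{\sqrt{\ell\ell'}}\,\gamma z_0\Big),
\]
so that integer matrices of determinant $\ell\ell'$ take over the role of the group elements.

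I would then estimate the right-hand side by grouping terms according to $n=\det\gamma$. The diagonal $\ell=\ell'$ contributes, through the scalar matrices $\gamma=\ell\,\mathrm{Id}$, a main term $\ll\|x\|^2 t_j$, which after division by $|A|^2$ reproduces the trivial size. Every remaining term is negligible unless $n^{-1/2}\gamma z_0$ lies within $\ll t_j^{-1+\epsilon}$ of $z_0$, so the whole estimate reduces to counting matrices $\gamma\in M_2(\Z)$ with $\det\gamma=n\asymp L^2$ and $d\big(z_0,n^{-1/2}\gamma z_0\big)\ll t_j^{-1}$. Bounding this count on average over $n$, inserting it above, and choosing $L$ to balance the amplification gain against this geometric error is what replaces the exponent $1/2$ by $5/12$.

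The hard part will be exactly this off-diagonal count of Hecke points returning close to $z_0$. Apart from the scalar matrices already extracted, one must show that very few integer matrices of determinant $n$ displace $z_0$ by a distance as small as $t_j^{-1}$; this amounts to the near-solvability of certain quadratic conditions in a narrow range and is delicate precisely because $z_0=i$ (respectively $(i-1)/2$) is an elliptic fixed point carrying additional near-stabilizers. Separating these genuine close returns from the trivial contribution, with enough uniformity in $n$, is the technical heart of the argument and is what ultimately pins the exponent at $5/12$ rather than the convexity value $1/2$.
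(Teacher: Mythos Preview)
The paper does not prove this lemma at all: it is stated as a citation to Iwaniec--Sarnak \cite[(A.15)]{IwSa} (with pointers to \cite[\S10]{BlHo} and \cite{sarnak}), and is used as a black box. Your sketch is therefore not being compared to any argument in the paper itself, but rather to the original source, and what you have written is a faithful outline of the Iwaniec--Sarnak amplification method: the pre-trace inequality, the amplifier built from primes and prime squares via $\lambda_j(\ell)^2=\lambda_j(\ell^2)+1$, the passage to integer matrices of determinant $n$, and the reduction to counting Hecke returns in a small hyperbolic ball around $z_0$.

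Two small remarks on your sketch. First, your concern that $z_0=i$ (or $(i-1)/2$) being an elliptic point creates extra ``near-stabilizers'' is somewhat overstated: the elliptic stabilizer is finite and contributes only a bounded constant, so it does not affect the exponent. In fact the special arithmetic of $z_0=i$ \emph{helps} the matrix count, as the paper itself exploits later in the proof of Lemma~\ref{newL} (see~\eqref{M_bound}), where the count reduces to a sum of $r(n)r(4d^2+n)$. Second, your description ``Bounding this count on average over $n$'' slightly undersells what is needed: the Iwaniec--Sarnak argument requires a uniform pointwise bound of the shape $\#\{\gamma:\det\gamma=n,\ u(\gamma z_0,z_0)<\delta\}\ll n^\epsilon(1+n\delta)$ (compare \cite[(A.9),(A.10)]{IwSa} and \cite[\S10]{BlHo}), not merely an average. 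With that in hand, the optimization in $L$ produces the exponent $5/12$ exactly as you say.
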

\medskip

We devote the rest of this section to explain the spectral origin of the main term in the asymptotic formula \eqref{S_asymp}.

The Fuchsian groups appearing in Proposition~\ref{spec} do not have exceptional eigenvalues \cite{huxley} then $t_j$ only takes real values (satisfying $|t_j|>5$ \cite{lmfdb}) and $t_0=i/2$ that corresponds to the zero eigenvalue $1/4+t_0^2$ of the Laplace-Beltrami operator coming from the constant eigenfunction $u_0$. 
As the fundamental domain of 
$\text{\rm PSL}_2(\Z)$ has area $\pi/3$ and $\Gamma_0(2)/\{\pm \text{\rm Id}\}$ is a subgroup of index 3, we have that $\lambda_0(m)|u_0(z)|^2$ is 
\[
 \frac{3\sigma(m)}{\pi\sqrt{m}}
 \quad\text{if }\Gamma=\text{\rm PSL}_2(\Z)
 \qquad\text{and}\qquad
 \frac{\sigma(m)}{\pi\sqrt{m}}
 \quad\text{if }\Gamma=\Gamma_0(2)/\{\pm \text{\rm Id}\}.
\]
If we tune $\Delta$ in such a way that $h(i/2)\sim 4\pi y$ and the contribution of the rest of the terms in the spectral expansion in Proposition~\ref{spec} is negligible then for $m$ fixed
\[
 A(4x,4m)\sim 
 \frac{24\sigma(m)}{m}x
 \text{ for }m\in\Z^+
 ,\quad
 S(x,m)\sim 
 \frac{8\sigma(m)}{m}x
 \text{ for }2\nmid m\in\Z^+.
\]
Lemma~\ref{SA} complements the latter formula producing for $m$ even the asymptotics $S(x,m)\sim 8\sigma(m/2)x/m$ if $k=1$ and 
\[
 S(x,m)\sim
 48
 \sum_{j=0}^{k-2}
 (-1)^j
 \sigma(m/2^{j+2})\frac xm
 +
 8(-1)^{k-1}
 \sigma(m/2^{k})\frac xm
 \quad\text{if }k>1.
\]
This equals $8\big(\sigma(2^k)-2\big)\sigma(m/2^{k}) x/m$ with standard manipulations  
using that~$\sigma$ is a multiplicative function. 
We can combine the three cases $2\nmid m$,
$4\nmid m$ with $m$ even and $4\mid m$ in the somewhat artificial single formula of~\eqref{S_asymp}.

\section{Proof of the main result}

After Proposition~\ref{spec} and knowing that the involved Eisenstein series behave essentially as the square of the Riemann zeta function, a fundamental problem to get uniform asymptotic formulas for $S(x,m)$ is to find good upper bounds for 
\begin{equation}\label{defE}
 \mathcal{S}(m,T)
 =
 \sum_{T\le |t_j|<2T} 
 |\lambda_j(m)| |u_j(z_0)|^2.
\end{equation}

Even if we employ exactly the same techniques as in \cite{chamizo_c}, the reduction given in \cite{kim} for the upper bound of $\theta$ changes substantially the way in which the optimization  can be made to obtain Theorem~\ref{main}, by this reason the unconditional results in \cite{chamizo_c} do not correspond to substitute $\theta\le 5/28$, the best upper bound available at that time, in Theorem~\ref{main}. 
There is also a new technique here not appearing in \cite{chamizo_c}. It consists in using the Hecke relation \cite[(8.39)]{iwaniec}
\begin{equation}\label{He_re}
 |\lambda_j(m)|^2
 =
 \sum_{d\mid m}
 \lambda_j\big(d^2\big)
\end{equation}
($\lambda_j\in\R$ because the Hecke operators are self-adjoint) to get via Cauchy's inequality 
\[
 \mathcal{S}^2(m,T)
 \le
 \sum_{d\mid m}
 \sum_{T\le |t_j|<2T} 
 \lambda_j\big(m^2/d^2\big) |u_j(z_0)|^2
 \cdot
 \sum_{T\le |t_j|<2T} 
 |u_j(z_0)|^2.
\]
The advantage of this expression is that for $d\ne m$ now $|u_j(z_0)|^2$ is multiplied by a changing sign coefficient that we can exploit using spectral theory to quantify the cancellation if $T$ and $m$ are not very large.
In \cite{chamizo_c} Cauchy's inequality was applied directly to \eqref{defE} and it required some knowledge about $\sum |u_j(z_0)|^4$. The alternative use of Cauchy's inequality described above was suggested by  Raphael Steiner (personal communication) and we fully credit him for this important remark that conveniently developed is responsible for the range $m^{3/2-\theta}\le x< m^{3/2+3\theta}$ in Theorem~\ref{main}.  

\begin{lemma}\label{newL}
 Let $\mathcal{S}(m,T)$ be as in \eqref{defE} and $\theta$ as in \eqref{lth}. Then
 \[
  \mathcal{S}(m,T)
  \ll
  m^\epsilon T^2
  \big(
  1+\min(m^\theta, m^{1/2}T^{-1})\big)
 \]
 for every $\epsilon>0$. 
\end{lemma}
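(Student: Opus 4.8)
The plan is to prove the two competing bounds $\mathcal{S}(m,T)\ll m^{\theta+\epsilon}T^2$ and $\mathcal{S}(m,T)\ll m^\epsilon(T^2+m^{1/2}T)$ separately; taking the better of the two produces the factor $1+\min(m^\theta,m^{1/2}T^{-1})$. The first is immediate: by the definition \eqref{lth} of $\theta$ one has $|\lambda_j(m)|\ll m^{\theta+\epsilon}$, whence
\[
 \mathcal{S}(m,T)\ll m^{\theta+\epsilon}\sum_{T\le|t_j|<2T}|u_j(z_0)|^2\ll m^{\theta+\epsilon}T^2,
\]
the last step being the local Weyl law (the case $n=1$ of the pre-trace formula used below). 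Everything then rests on the second bound, which is where Steiner's form of Cauchy's inequality enters.

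Starting from the inequality $\mathcal{S}^2(m,T)\le A\cdot B$ displayed before the lemma, with $B=\sum_{T\le|t_j|<2T}|u_j(z_0)|^2\ll T^{2+\epsilon}$ and (after reindexing $d\mapsto m/d$ in \eqref{He_re})
\[
 A=\sum_{d\mid m}\Sigma\big((m/d)^2\big),\qquad \Sigma(n):=\sum_{T\le|t_j|<2T}\lambda_j(n)|u_j(z_0)|^2,
\]
the matter reduces to bounding $\Sigma(n)$ for the \emph{perfect squares} $n=(m/d)^2$. The whole gain over the old argument is that the arguments $n$ are now squares and the coefficients $\lambda_j(n)$ change sign with $j$, so one expects genuine cancellation in $\Sigma(n)$ rather than the trivial bound $n^\theta T^2$.

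To quantify this I would apply the pre-trace formula twisted by $T_n$. Choosing an even test function localizing the spectral parameter to $|t|\asymp T$, with point-pair kernel $k$ satisfying $k(z_0,z_0)\asymp T^2$ and rapid decay once the hyperbolic distance exceeds $T^{-1}$, \eqref{HeMa} gives $\Sigma(n)=\frac{1}{\sqrt n}\sum_{\det\tau=n}k(z_0,\tau z_0)$ up to an Eisenstein term controlled by $|\eta_t(n)|\le\tau(n)$ and negligible tails. Since $n$ is a square, the scalar matrix $\sqrt n\,\mathrm{Id}$ fixes $z_0$ and yields the main term $k(z_0,z_0)/\sqrt n\asymp T^2/\sqrt n$. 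For the non-scalar matrices with $\tau z_0$ within $\ll T^{-1}$ of $z_0$, writing $\tau=\left(\begin{smallmatrix}a&b\\c&d\end{smallmatrix}\right)$ and using $\mathrm{Im}(\tau z_0)=n/(c^2+d^2)$ together with Brahmagupta's identity $(a^2+b^2)(c^2+d^2)=(ac+bd)^2+n^2$, proximity forces $c^2+d^2\approx n$, $a^2+b^2\approx n$ and $ac+bd\approx 0$, each within $O(n/T)$. The \emph{main obstacle} is to show that the number of such non-scalar $\tau$ is $\ll n^{1+\epsilon}T^{-2}$: fibering over the $\ll n^{1+\epsilon}T^{-1}$ admissible pairs $(c,d)$ in the annulus $|c^2+d^2-n|\ll n/T$, the solutions of $ad-bc=n$ form a single line $\Z(c,d)+(a_0,b_0)$ along which $ac+bd$ runs through an arithmetic progression of step $\approx n$, so the orthogonality window $|ac+bd|\ll n/T$ is met only for a proportion $\approx T^{-1}$ of the pairs. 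This extra factor $T^{-1}$—equivalently, the equidistribution of the Hecke points $\tau z_0$ at scale $T^{-1}$—is precisely what distinguishes the required bound from the weaker one obtained by discarding the orthogonality constraint, and it is where the restriction that $T$ and $m$ are not too large is used. Each such matrix contributes $\ll T^2/\sqrt n$, giving $\Sigma(n)\ll n^\epsilon\big(T^2/\sqrt n+\sqrt n\big)$.

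It then remains to sum over divisors. With $\sqrt n=m/d$,
\[
 A\ll m^\epsilon\sum_{d\mid m}\Big(\frac{T^2 d}{m}+\frac{m}{d}\Big)\ll m^\epsilon\Big(\frac{T^2}{m}\sigma(m)+\sigma(m)\Big)\ll m^\epsilon(T^2+m),
\]
using $\sigma(m)\ll m^{1+\epsilon}$. Hence $\mathcal{S}(m,T)\le(AB)^{1/2}\ll m^\epsilon T(T^2+m)^{1/2}\ll m^\epsilon(T^2+m^{1/2}T)$, and combining this with the trivial bound above yields the stated estimate. The delicate lattice count of the previous paragraph is the only nonroutine step; the rest is bookkeeping with the kernel $k$ and the divisor sum.
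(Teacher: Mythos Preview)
Your overall architecture matches the paper's exactly: the trivial bound $m^{\theta+\epsilon}T^2$, then Steiner's Cauchy step with the Hecke relation \eqref{He_re}, then the pretrace formula twisted by $T_n$ to estimate $\Sigma(n)$ (the paper's $\mathcal{S}_d(T)$), and finally the divisor sum. The algebra at the end is right as well.

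The gap is in the lattice count, which you correctly flag as the ``main obstacle''. Your fibering argument is heuristic: after localizing $(c,d)$ to the annulus $|c^2+d^2-n|\ll n/T$ and parameterizing the line $ad-bc=n$ by $k\in\Z$, the window for $k$ has length $\asymp 1/T$, so for each fixed $(c,d)$ there is at most one admissible $(a,b)$. Your claimed extra factor $T^{-1}$ (``a proportion $\approx T^{-1}$ of the pairs'') then amounts to asserting that for only a $T^{-1}$-fraction of the $(c,d)$ does this window contain an integer. That is an equidistribution statement about the fractional parts of the centers $k^*(c,d)$ which you do not prove; without it your count gives $n^{1+\epsilon}T^{-1}$ rather than $n^{1+\epsilon}T^{-2}$, and the final bound degrades to $m^\epsilon(T^2+m^{1/2}T^{3/2})$, which is useless. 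Also, a minor slip: for $z_0=i$ the stabilizer in the determinant-$n$ matrices is not the single scalar matrix but the $r(n)$ matrices $\left(\begin{smallmatrix}a&b\\-b&a\end{smallmatrix}\right)$ with $a^2+b^2=n$; this only costs an $n^\epsilon$.

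The paper avoids the equidistribution issue entirely. The point is the exact formula
\[
 u(\gamma i,i)=\frac{(d-a)^2+(b+c)^2}{4n}
 \qquad\text{for}\quad
 \gamma=\begin{pmatrix}a&b\\c&d\end{pmatrix},\ \det\gamma=n,
\]
so the count of $\gamma$ with $u(\gamma i,i)<t$ becomes, via the linear change $A=d-a$, $B=b+c$, $C=a+d$, $D=b-c$ (with $C^2+D^2-A^2-B^2=4n$),
\[
 M(t)\ll \sum_{A^2+B^2<4nt} r(4n+A^2+B^2)\ll \sum_{k<4nt} r(k)\,r(4n+k)\ll n^\epsilon(1+nt).
\]
This delivers $\Sigma(n)\ll n^\epsilon(T^2/\sqrt n+\sqrt n)$ directly, with no fibering and no equidistribution. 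Replace your paragraph on the non-scalar matrices with this computation and the proof is complete.
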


\begin{proof}
 We vary a little the previous scheme, smoothing the sum and completing the spectrum. Namely, we start noting that $e^4\mathcal{S}(m,T)$ is less than
 \[
  \sum_j |\lambda_j(m)| e^{-t_j^2/T^2}|u_j(z_0)|^2
  +
  \frac{1}{4\pi}
  \sum_{\mathfrak{a}}
  \int_{\R}
  |\eta_{t}(m)|
  e^{-t^2/T^2}
  \big|E_{\mathfrak{a}}(z_0,\frac{1}{2}+it)\big|^2\; dt.
 \]
 Using Cauchy's inequality, \eqref{He_re} and recalling that $\eta_t(m)$ are also  Hecke eigenvalues obeying this relation, we have
 \begin{equation}\label{CauE}
  \mathcal{S}^2(m,T)
  \ll 
  \mathcal{S}_1(T)\sum_{d\mid m}
  \mathcal{S}_d(T)
 \end{equation}
 where 
 \[
    \mathcal{S}_d(T)
  =
  \sum_j \lambda_j(d^2) e^{-t_j^2/T^2}|u_j(z_0)|^2
  +
  \frac{1}{4\pi}
  \sum_{\mathfrak{a}}
  \int_{\R}
  \eta_{t}(d^2)
  e^{-t^2/T^2}
  \big|E_{\mathfrak{a}}(z_0,\frac{1}{2}+it)\big|^2\; dt.
 \]
 Note that $\lambda_j(1)=\eta_t(1)=1$. By \cite[Prop.7.2]{iwaniec} and \eqref{lth} we have the crude bound 
 \begin{equation}\label{crude}
  \mathcal{S}_d(T)
  \ll
  d^{2\theta+\epsilon}T^2.
 \end{equation}
 This is the best possible for $d=1$ but we expect some cancellation for larger values of $d$ at least in some ranges of $T$. 
 
 By the pretrace formula \cite[Th.7.4]{iwaniec} and \eqref{HeMa}
 \[
  \mathcal{S}_d(T)
  \ll
  T_{d^2}\Big|_{z=z_0}
  \sum_{\gamma\in\Gamma}
  k\big(u(\gamma z,z_0)\big)
  \qquad
  \text{where}\quad
  u(z,w)=\frac{|z-w|^2}{4\Im z \Im w}
 \]
 and $k(t)$ is 
 the inverse Selberg--Harish-Chandra transform of 
 $h(t)=e^{-t^2/T^2}$. As indicated, the Hecke operator $T_{d^2}$ acts on $z$. 
 
 Recall that the matrices corresponding to the maps $z\mapsto (az+b)/d$ in the definition of the Hecke operators \eqref{Heop} are representatives of $\Gamma\backslash\Gamma_{d^2}$ where~$\Gamma_{d^2}$ formally has the same definition as $\Gamma$ but imposing that the determinant of its elements is~$d^2$ instead of~$1$. Then 
 \[
  \mathcal{S}_d(T)
  \ll
  d^{-1}
  \sum_{\gamma\in\Gamma_{d^2}}
  k\big(u(\gamma z_0,z_0)\big).
 \]
 In \cite[\S5]{ChRa} it is shown a general estimate for the inverse Selberg--Harish-Chandra transform that gives in this case 
 \[
  k(t)\ll \phi(t)
  \qquad\text{with}\quad
  \phi\big(\sinh^2\frac{t}{2}\big)
  =
  T^2e^{-T^2t^2/4}
  \sqrt{\frac{t}{\sinh t}}.
 \]
 In particular $k(t)$ decays faster than any power and  as $\sinh^2({t}/{2})\sim t^2/4$ for~$t$ small, the main contribution comes from $u(\gamma z_0,z_0)\ll T^{-2+\epsilon}$. Let us define
 \[
  M(t)
  =
  \#\big\{
  \gamma\in\Gamma_{d^2}\;:\;
  u(\gamma z_0,z_0)<t
  \big\}.
 \]
 In \cite[(A.9), (A.10)]{IwSa} (see \cite[\S10]{BlHo} for a more explicit statement) there is a general bound for $M(t)$ based on a counting argument. We will show that we can take advantage of the special form of $z_0$, as in \cite[Lemma~13.1]{iwaniec}, to get 
 \begin{equation}\label{M_bound}
  M(t)
  \ll 
  d^\epsilon+
  \big(d^2t)^{1+\epsilon}.
 \end{equation}
 Hence
 \[
  \mathcal{S}_d(T)
  \ll
  d^{-1}
  T^2M(T^{-2})
  +d^{-1}
  \int_{T^{-2}}^\infty
  \phi(t)\,dM(t)
  \ll
  d^{-1+\epsilon}T^2 + d^{1+\epsilon}.
 \]
 Combining this with \eqref{crude} we deduce
 \[
  \mathcal{S}_d(T)
  \ll
  d^{\epsilon}
  T^2\big(d^{-1}+\min(d^{2\theta},dT^{-2})\big).
 \]
 And since \eqref{CauE}
 \[
  \mathcal{S}^2(m,T)
  \ll
  m^{\epsilon}T^4
  +
  m^{\epsilon}T^4
  \sum_{d\mid m}\min(d^{2\theta},dT^{-2})\big)
 \]
 which gives the result. 
 
 It remains to prove \eqref{M_bound}. The points $z_0=i$ and $z_0=(i-1)/2$ are in the same orbit under $\text{PSL}_2(\Z)$. If $z_0=(i-1)/2$ let $\gamma_0\in\text{PSL}_2(\Z)$ such that $z_0=\gamma_0 i$ then in the definition of $M$ we have $u(\gamma z_0,z_0)=u(\gamma \gamma_0 i,\gamma_0 i)=u(\gamma_0^{-1}\gamma \gamma_0 i,i)$
 with $\gamma_0^{-1}\gamma \gamma_0$ an integral matrix of determinant $d^2$. whence to prove \eqref{M_bound} we can restrict ourselves to the case $z_0=i$, $\Gamma= \text{PSL}_2(\Z)$. A calculation shows that if $\gamma$ has determinant $d^2$
 \[
  u(\gamma i,i)
  =
  \frac{(v-a)^2+(b+u)^2}{4d^2}
  \qquad\text{where}\quad
  \gamma= \begin{pmatrix}
           a & b \\ u & v
          \end{pmatrix}.
 \]
 Writing $A=v-a$, $B=b+u$, $C=a+v$ and $D=b-u$; $u(\gamma i,i)<t$ implies $A^2+B^2<4d^2t$. Noting $4d^2=C^2+D^2-A^2-B^2$ we have 
 \[
  M(t)\ll
  \sum_{A^2+B^2<4d^2t}
  r(4d^2+A^2+B^2)
  \ll
  \sum_{n<4d^2t}
  r(n)r(4d^2+n)
 \]
 and this shows \eqref{M_bound} because $r(k)=O(k^\epsilon)$.
\end{proof}

To ease references we state here the bound for $\mathcal{S}(m,T)$ obtained combining Lemma~\ref{kuznetsov} and a convexity bound coming from Lemma~\ref{Linf} and Bessel inequality. 

\begin{lemma}\label{KLinf}
 For every $\epsilon>0$
 \[
  \mathcal{S}(m,T)
  \ll 
  T^{17/12+\epsilon}\big(T+m^{1/4+\epsilon}\big).
 \]

\end{lemma}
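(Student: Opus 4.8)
The plan is to separate the arithmetic weight $|\lambda_j(m)|$ from the geometric weight $|u_j(z_0)|^2$ by a single application of Cauchy's inequality, and then estimate the two resulting sums by the two cited ingredients. Concretely, I would write
\[
 \mathcal{S}(m,T)
 \le
 \Big(\sum_{T\le |t_j|<2T} |\lambda_j(m)|^2\Big)^{1/2}
 \Big(\sum_{T\le |t_j|<2T} |u_j(z_0)|^4\Big)^{1/2}.
\]
The first factor is handled immediately by Lemma~\ref{kuznetsov}, which bounds the sum of squares by $T^{2+\epsilon}+m^{1/2+\epsilon}$, so that after taking square roots it is $\ll T^{1+\epsilon}+m^{1/4+\epsilon}$.

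The second factor is exactly what I would call the convexity bound. Here Lemma~\ref{Linf} supplies the pointwise estimate $|u_j(z_0)|^2\ll |t_j|^{5/6+\epsilon}\ll T^{5/6+\epsilon}$ throughout the dyadic range $T\le |t_j|<2T$, so one factor of $|u_j(z_0)|^2$ can be pulled out of the fourth-power sum:
\[
 \sum_{T\le |t_j|<2T} |u_j(z_0)|^4
 \ll
 T^{5/6+\epsilon}
 \sum_{T\le |t_j|<2T} |u_j(z_0)|^2.
\]
The remaining sum is controlled by the Bessel inequality, equivalently the local Weyl law at the fixed point $z_0$, which follows from the pretrace formula already invoked in the proof of Lemma~\ref{newL}: choosing a nonnegative test function that dominates the indicator of $[T,2T]$ gives $\sum_{T\le |t_j|<2T}|u_j(z_0)|^2\ll T^2$. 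Hence the fourth-power sum is $\ll T^{17/6+\epsilon}$ and its square root is $\ll T^{17/12+\epsilon}$.

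Multiplying the two factors then yields
\[
 \mathcal{S}(m,T)
 \ll
 T^{17/12+\epsilon}\big(T^{1+\epsilon}+m^{1/4+\epsilon}\big)
 \ll
 T^{17/12+\epsilon}\big(T+m^{1/4+\epsilon}\big),
\]
which is the claimed bound after renaming $\epsilon$. I do not expect a genuine obstacle once both inputs are in place; the only point requiring care is that the interpolation in the fourth-power sum is honestly a convexity step, trading the sharp exponent $5/12$ of the sup-norm bound of Lemma~\ref{Linf} against the $m$-independent $L^2$ count, and one must keep the $\epsilon$'s uniform so that the exponent $17/12$ is attained up to $T^\epsilon$. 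The whole dependence on $m$ is confined to the second term of Lemma~\ref{kuznetsov} and hence survives only as the additive $m^{1/4+\epsilon}$ in the final factor.
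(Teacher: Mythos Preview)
Your proof is correct and essentially identical to the paper's own argument: apply Cauchy--Schwarz to separate $\sum|\lambda_j(m)|^2$ from $\sum|u_j(z_0)|^4$, bound the first factor by Lemma~\ref{kuznetsov}, and for the second pull out $\sup|u_j(z_0)|^2\ll T^{5/6+\epsilon}$ via Lemma~\ref{Linf} and control the remaining $\sum|u_j(z_0)|^2\ll T^2$ by the local Weyl law/Bessel inequality (the paper cites Lemma~\ref{newL} with $m=1$ or \cite[Prop.~7.2]{iwaniec} for this step). The only cosmetic difference is the order of presentation.
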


\begin{proof}
 We have
 \[
  |\mathcal{S}(m,T)|^2
  \le
  \sum |\lambda_j(m)|^2
  \sum |u_j(z_0)|^4
  \le 
  \sum |\lambda_j(m)|^2
  \sum |u_j(z_0)|^2
  \sup |u_j(z_0)|^2
 \]
 where the sums and the supremum are over $T\le |t_j|<2T$. For the first sum use Lemma~\ref{kuznetsov}, for the second Lemma~\ref{newL} with $m=1$ (or directly Bessel inequality \cite[Prop.7.2]{iwaniec})
 and for the supremum use Lemma~\ref{Linf}.
\end{proof}
\medskip

We divide the proof of Theorem~\ref{main} in two parts studying separately the spectral contribution in the cases $m\le x$ and $m> x$.

We first state an auxiliary result bringing Proposition~\ref{spec} closer to the estimation of $E(x,m)$. 

\begin{lemma}\label{commonl}
 Let $\mathcal{E}(x,m)$ be the result of subtracting 
 $8\pi\sigma_{-1}(m)x/|\Gamma\backslash\mathbb{H}|$
 to the quantities $S(x,m)$ with $2\nmid m$ or $A(4x,4m)$. Then for $x\ge 1$
 \[
  \mathcal{E}(x,m)
  \ll
  m^\epsilon\Delta
  \big(x+(mx)^{1/2}\big)
  +x^{1/2}\log(2x)
  +
  \sum_{j=2}^\infty
  \mathcal{E}_{2^j}(x,m)
 \]
 where $\mathcal{E}_{2^j}(x,m)=\sqrt{m}H(T)
 \big(|\mathcal{S}(m,T)|+T^2\big)$.
\end{lemma}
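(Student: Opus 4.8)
The plan is to start from the spectral expansion of Proposition~\ref{spec} and peel off the $j=0$ summand coming from the constant eigenfunction $u_0$, which carries the main term, and then estimate the remaining discrete spectrum and the continuous spectrum separately by a dyadic decomposition in the spectral parameter. For the $j=0$ term, since $u_0$ is the normalized constant eigenfunction we have $|u_0(z_0)|^2=1/|\Gamma\backslash\mathbb{H}|$, and by \eqref{Heop} the Hecke eigenvalue of the constant function is $\lambda_0(m)=\sigma(m)/\sqrt{m}$, so that $2\sqrt{m}\,\lambda_0(m)h(i/2)|u_0(z_0)|^2=2\sigma(m)h(i/2)/|\Gamma\backslash\mathbb{H}|$. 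Inserting $h(i/2)=4\pi y+O\big((y+y^{1/2})\Delta\big)$ from \eqref{hi2} with $y=x/m$, the leading part is exactly $8\pi\sigma_{-1}(m)x/|\Gamma\backslash\mathbb{H}|$ (using $\sigma(m)/m=\sigma_{-1}(m)$), which is the quantity subtracted in the definition of $\mathcal{E}(x,m)$. The error from this term is $2\sigma(m)\,O\big((y+y^{1/2})\Delta\big)/|\Gamma\backslash\mathbb{H}|$, and the bound $\sigma(m)\ll m^{1+\epsilon}$ turns it into $m^\epsilon\Delta\big(x+(mx)^{1/2}\big)$, the first term of the claimed estimate.

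Next I would treat the remaining discrete sum over $j\ge 1$. Since these groups have no exceptional eigenvalues and satisfy $|t_j|>5$, the spectral parameters fall into dyadic ranges $[T,2T)$ with $T=2^j$, $j\ge 2$. Within each range I use $|h(t_j)|\le H(t_j)$; because $H$ is monotone decreasing in $|t|$ in both cases $y\le 1$ and $y>1$, I may replace $H(t_j)$ by $H(T)$ and factor it out, obtaining $2\sqrt{m}\sum_{T\le|t_j|<2T}|\lambda_j(m)||u_j(z_0)|^2 H(T)\ll \sqrt{m}H(T)\,\mathcal{S}(m,T)$ by the definition \eqref{defE}. This produces the $\sqrt{m}H(T)|\mathcal{S}(m,T)|$ part of $\mathcal{E}_{2^j}(x,m)$.

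Finally the continuous spectrum. For $|t|\ge 4$ the same dyadic splitting applies: I bound $|\eta_t(m)|\le\tau(m)\ll m^\epsilon$ and $|h(t)|\le H(T)$, and invoke the standard mean-square estimate $\sum_{\mathfrak{a}}\int_{T\le|t|<2T}|E_{\mathfrak{a}}(z_0,\tfrac12+it)|^2\,dt\ll T^{2+\epsilon}$ (reflecting that the Eisenstein series behaves as the square of $\zeta$), which yields a contribution $\ll \sqrt{m}H(T)T^2$ per block, the remaining part of $\mathcal{E}_{2^j}(x,m)$. For the bounded range $|t|<4$, where $E_{\mathfrak{a}}(z_0,\tfrac12+it)$ is analytic at $s=1/2$ and the integral is $O(1)$, the worst case is $y>1$, in which $H(t)\ll y^{1/2}\log(2y)$ so that $\sqrt{m}H(t)\ll x^{1/2}\log(2x)$; for $y\le 1$ one gets the even smaller $\sqrt{m}y\le x^{1/2}$. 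This is the source of the $x^{1/2}\log(2x)$ term. Summing over $j\ge 2$ converges because $H(T)\ll T^{-3}$ for large $T$ while $\mathcal{S}(m,T)$ and $T^2$ grow only polynomially, and collecting the three contributions gives the stated bound.

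The main obstacle I expect is the continuous spectrum: extracting the clean $x^{1/2}\log(2x)$ requires care with the $\log(2y)$ factor and with the behavior of $H$ and of the Eisenstein integral near $t=0$, while the high-frequency part relies on the mean-square bound for $|E_{\mathfrak{a}}(z_0,\tfrac12+it)|^2$. By contrast, once $\mathcal{S}(m,T)$ has been introduced, the discrete sum and the isolation of the main term are essentially bookkeeping, with the only arithmetic input being $\lambda_0(m)=\sigma(m)/\sqrt{m}$ and $\sigma(m)\ll m^{1+\epsilon}$.
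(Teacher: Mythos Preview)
Your proof is correct and follows essentially the same route as the paper's: isolate the $u_0$ term via \eqref{hi2}, dyadically split the cuspidal sum (using $|t_j|>5$), and treat the Eisenstein contribution separately for $|t|<4$ and $|t|\ge 4$. The one point worth tightening is the Eisenstein mean square: you quote $\sum_{\mathfrak a}\int_{T}^{2T}|E_{\mathfrak a}(z_0,\tfrac12+it)|^2\,dt\ll T^{2+\epsilon}$, whereas the paper invokes M\"uller's sharper $T(\log T)^2$, which is what makes the clean $T^2$ (rather than $T^{2+\epsilon}$) appear in $\mathcal{E}_{2^j}$.
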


Our bounds for $\mathcal{S}(m,T)$ are greater than $T^2$, even under Ramanujan-Petersson conjecture, hence the term $T^2$ in $\mathcal{E}_T$ is irrelevant in practice. 

\begin{proof}
 The normalized constant eigenfunction $u_0(z)=|\Gamma\backslash\mathbb{H}|^{-1/2}$ has Hecke eigenvalue $\sigma(m)/\sqrt{m}=O(m^{1/2+\epsilon})$. Then its contribution to the spectral expansion in Proposition~\ref{spec} is, according to \eqref{hi2},
 \[
  2\sqrt{m}
  \frac{\sigma(m)}{\sqrt{m}}|u_0(z_0)|^2
  \big(4\pi y +O(\Delta (y+y^{1/2}))\big)
 \]
 where $y=x/m$. Subtracting $8\pi\sigma(m)y/|\Gamma\backslash\mathbb{H}|$ we get
 $O\big(m^\epsilon \Delta (x+(mx)^{1/2})\big)$.
 As we mentioned before, $|t_j|>5$ for the involved groups~\cite{lmfdb}. Then the sum of $\mathcal{E}_{2^j}$ for $j\ge 2$ bounds the contribution of the discrete spectrum in Proposition~\ref{spec} subdividing into dyadic intervals. For the Eisenstein series it is known \cite[Th.6.2]{muller} that 
 $\int_{T}^{2T}\big|E_{\mathfrak{a}}(z,\frac{1}{2}+it)\big|^2\; dt\ll T(\log T)^2$
 then the term~$T^2$ in $\mathcal{E}_{T}$ absorbs their contribution for $|t|>4$. Finally
 \[
  \int_{-4}^4
  \eta_{t}(m)
  h(t)
  \big|E_{\mathfrak{a}}(z_0,\frac{1}{2}+it)\big|^2\; dt
  \ll
  \sqrt{m}
  \int_{-4}^4
  H(t)\; dt
 \]
 and this is $O\big(x^{1/2}\log(2x)\big)$ using the crude bound $H(t)\ll (x/m)^{1/2}\log(2x)$ if $x>m$ and $H(t)\ll x/m$ if $x\le m$.
\end{proof}

\begin{proposition}\label{mlex}
 Let $\mathcal{E}(x,m)$ be as in Lemma~\ref{commonl}. Then for $1\le m\le x$ and any $\epsilon>0$ we have
 \[
  m^{-\epsilon}\mathcal{E}(x,m)
  \ll
  \begin{cases}
   x^{2/3} & \text{if } m^{3/2+3\theta}\le x,
   \\
   m^{(1+2\theta)/4}x^{1/2} & \text{if } m^{3/2-\theta}\le x<m^{3/2+3\theta},
   \\
   m^{2\theta/3}x^{2/3} & \text{if } m\le x<m^{3/2-\theta}.
  \end{cases}
 \]
\end{proposition}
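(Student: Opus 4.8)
The plan is to substitute the bound of Lemma~\ref{newL} into the decomposition of Lemma~\ref{commonl} and then to optimize the free parameter $\Delta$; the real work is the bookkeeping of a dyadic sum whose summand changes shape both because of the cutoff in $H$ and because of the three regimes of $\mathcal{S}(m,T)$.

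First I would specialize Lemma~\ref{commonl} to $1\le m\le x$, so that $y=x/m\ge 1$ and the second expression for $H$ in Proposition~\ref{spec} is in force. Writing $T_0=1/\Delta$, for a dyadic $T=2^j\ge 4$ this yields
\[
 \sqrt{m}\,H(T)
 \asymp
 \begin{cases}
  x^{1/2}T^{-3/2} & \text{if } T\le T_0,\\
  x^{1/2}T_0^{3/2}T^{-3} & \text{if } T> T_0.
 \end{cases}
\]
Because $m\le x$ gives $(mx)^{1/2}\le x$, the first error term of Lemma~\ref{commonl} is $\ll m^\epsilon x/T_0$, and $x^{1/2}\log(2x)$ will turn out to be of lower order in every case; so it remains to estimate $\sum_{j\ge 2}\mathcal{E}_{2^j}$ with $\mathcal{E}_T\asymp x^{1/2}T^{-3/2}\mathcal{S}(m,T)$ for $T\le T_0$ (the summand $T^2$ being dominated by $\mathcal{S}(m,T)$) and an extra factor $(T/T_0)^{-3/2}$ beyond $T_0$.

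Next I would insert Lemma~\ref{newL}, which up to $m^\epsilon$ gives $\mathcal{S}(m,T)\ll m^\theta T^2$ for $T\le m^{1/2-\theta}$, then $\mathcal{S}(m,T)\ll m^{1/2}T$ for $m^{1/2-\theta}<T\le m^{1/2}$, and $\mathcal{S}(m,T)\ll T^2$ for $T>m^{1/2}$. Summing over dyadic $T$, the tail $T>T_0$ decays at least like $T^{-1}$ and is controlled by $T\sim T_0$; the Hecke block is increasing and is maximized at its top; the Steiner block is decreasing and is maximized at its bottom $T\sim m^{1/2-\theta}$, producing the $\Delta$-independent quantity $x^{1/2}m^{(1+2\theta)/4}$; and the block $T>m^{1/2}$ is again increasing up to $T\sim T_0$. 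Hence, up to $m^\epsilon$ and logarithms, $\mathcal{E}(x,m)$ is bounded by $x/T_0+x^{1/2}m^{(1+2\theta)/4}$ together with $x^{1/2}m^\theta T_0^{1/2}$ when $T_0\le m^{1/2-\theta}$ and with $x^{1/2}T_0^{1/2}$ when $T_0>m^{1/2}$.

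Finally I would minimize over $T_0>1$. Balancing $x/T_0$ against $x^{1/2}m^\theta T_0^{1/2}$ gives $T_0\asymp x^{1/3}m^{-2\theta/3}$ and value $x^{2/3}m^{2\theta/3}$, which stays inside the Hecke range $T_0\le m^{1/2-\theta}$ exactly when $x\le m^{3/2-\theta}$: this is the third line. Balancing $x/T_0$ against $x^{1/2}T_0^{1/2}$ instead gives $T_0\asymp x^{1/3}$ and value $x^{2/3}$, admissible once $x>m^{3/2}$; comparing $x^{2/3}$ with the $\Delta$-free term $x^{1/2}m^{(1+2\theta)/4}$ shows the former dominates precisely for $x\ge m^{3/2+3\theta}$, giving the first line, whereas for $m^{3/2-\theta}\le x\le m^{3/2}$ one takes $T_0=m^{1/2}$ and for $m^{3/2}<x<m^{3/2+3\theta}$ one takes $T_0=x^{1/3}$, and in both subcases the term $x^{1/2}m^{(1+2\theta)/4}$ dominates $x/T_0$ and $x^{2/3}$, giving the middle line. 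The main obstacle is exactly this matching: one must check that these competing candidates, under the cutoff constraints $T_0\le m^{1/2-\theta}$ and $T_0>m^{1/2}$, partition $\{m\le x\}$ into the three stated intervals, and that $\Delta(mx)^{1/2}$, $x^{1/2}\log(2x)$ and the $O(\log x)$ loss from the number of dyadic blocks are absorbed (they are, since $m\le x$ lets $m^\epsilon$ swallow them and each displayed bound exceeds $x^{1/2}\log x$).
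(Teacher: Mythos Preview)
Your argument is correct and follows the same route as the paper: substitute Lemma~\ref{newL} into Lemma~\ref{commonl} (using the $y>1$ form of $H$), locate the dyadic maximum of $\mathcal{E}_T$ at $T\asymp\Delta^{-1}$, $m^{1/2-\theta}$, $m^{1/2}$, and optimize in $\Delta$; the paper packages this as $\inf_\Delta\sup_T F(\Delta,T)$ and splits into the two cases $x\gtrless m^{3/2-\theta}$, but the optimal choices $\Delta=x^{-1/3}$ and $\Delta=m^{2\theta/3}x^{-1/3}$ and the resulting bounds are exactly yours. One point to phrase more carefully: the ``$\Delta$-independent'' Steiner contribution $x^{1/2}m^{(1+2\theta)/4}$ only appears when $T_0\ge m^{1/2-\theta}$, since otherwise that whole block lies in the tail and carries the extra factor $(T_0/T)^{3/2}$, making it dominated by the Hecke value at $T_0$ --- your final optimization for the third line tacitly uses this, but the preceding sentence asserts the term unconditionally, which if taken literally would spoil the bound $m^{2\theta/3}x^{2/3}$ in the range $x<m^{3/2-\theta}$.
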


\begin{proof}
 We have
 $\sqrt{m}H(T)\ll x^{1/2}T^{-3/2}\min\big(1,(T\Delta)^{-3/2}\big)$
 in Lemma~\ref{commonl} and substituting the bound of Lemma~\ref{newL}, we get
 \[
  \mathcal{E}_T(x,m)
  \ll
  m^\epsilon 
  x^{1/2}T^{1/2}\min\big(1,(T\Delta)^{-3/2}\big)
  \big(
  1+\min(m^\theta, m^{1/2}T^{-1})\big).
 \]
 Defining
 \[
  F(\Delta, T)
  =
  x\Delta +
  x^{1/2}T^{1/2}\min\big(1,(T\Delta)^{-3/2}\big)
  \big(
  1+\min(m^\theta, m^{1/2}T^{-1})\big)
 \]
 it is enough to prove 
 \begin{equation}\label{infsupF1}
  \inf_{0<\Delta<1}
  \sup_{T\ge 4} 
  F(\Delta, T)
  \ll 
  \begin{cases}
   x^{2/3}+m^{1/4+\theta/2}x^{1/2}&\text{if } x\ge m^{3/2-\theta},
   \\
   m^{2\theta/3}x^{2/3}&\text{if } x< m^{3/2-\theta}
  \end{cases}
 \end{equation}
 because $x^{2/3}\ge m^{1/4+\theta/2}x^{1/2}$ if and only if $x\ge m^{3/2+3\theta}$. Choosing $T$ and $\Delta$ as in the rest of the proof one could show that \eqref{infsupF1} is in fact sharp, the ``$\ll$'' sign could be replaced by ``$\asymp$''.
 
 The values $\Delta^{-1}$, $m^{1/2-\theta}$ and $m^{1/2}$ subdivide $[4,\infty)$ into at most~$4$ intervals. On each of them $F(\Delta,T)-x\Delta$ behaves as a power of $T$ then   $G(\Delta)=\sup_{T\ge 4} F(\Delta, T)$ satisfies
 \[
  G(\Delta)
  \asymp 
  F(\Delta,\Delta^{-1})
  +F(\Delta,m^{1/2-\theta})
  +F(\Delta,m^{1/2}).
 \]
 Clearly $\min\big(1,(T\Delta)^{-3/2}\big)$ is not increasing in $T$ and $T^{1/2}\min(m^\theta, m^{1/2}T^{-1})$
 is greater for $T=m^{1/2-\theta}$ than for $T=m^{1/2}$. It assures  $F(\Delta,m^{1/2})\ll F(\Delta,m^{1/2-\theta})$ and we have 
 \begin{multline*}
  G(\Delta)
  \ll
  x\Delta
  +
  x^{1/2}\Delta^{-1/2}\big(
  1+\min(m^{1/2}\Delta, m^\theta)\big)
  \\
  +m^{(1+2\theta)/4}x^{1/2}  
  \min\big(1,(m^{1/2-\theta}\Delta)^{-3/2}\big).
 \end{multline*}
 If $x\ge  m^{3/2-\theta}$ we choose the first arguments in the minima to get 
 \[
  G(\Delta)
  \ll
  x\Delta
  +
  x^{1/2}\Delta^{-1/2}
  +
  m^{1/2}x^{1/2}\Delta^{1/2}
  +
  m^{(1+2\theta)/4}x^{1/2}.
 \]
 Hence $\inf G(\Delta)\le G(x^{-1/3})\ll x^{2/3}+m^{1/2}x^{1/3}+m^{(1+2\theta)/4}x^{1/2}$ and the central term is negligible because $x^{2/3}>m^{1/2}x^{1/3}$ if $x<m^{3/2}$ and we have  $m^{1/2}x^{1/3}<m^{(1+2\theta)/4}x^{1/2}$ otherwise.
 
 If $x< m^{3/2-\theta}$ we choose the second arguments in the minima to deduce 
 \[
  G(\Delta)
  \ll
  x\Delta
  +
  m^{\theta}x^{1/2}\Delta^{-1/2}
  +
  m^{-1/2+2\theta}x^{1/2}\Delta^{-3/2}.
 \]
 Hence $\inf G(\Delta)\le G(m^{2\theta/3}x^{-1/3})\ll m^{2\theta/3}x^{2/3}+m^{-1/2+\theta}x$ and in our range the last term is negligible. 
\end{proof}

\begin{proposition}\label{mgex}
 Let $\mathcal{E}(x,m)$ be as in Lemma~\ref{commonl}. Then given $\epsilon>0$ for $x\le m< x^{2/(1+2\theta)}$  we have $\mathcal{E}(x,m)
  \ll m^{(1+2\theta)/3+\epsilon}x^{1/3}$.
\end{proposition}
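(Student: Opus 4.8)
The plan is to handle the range $m>x$ complementary to Proposition~\ref{mlex}, again starting from the decomposition of $\mathcal{E}(x,m)$ in Lemma~\ref{commonl}. Since now $y=x/m<1$, the relevant estimate for the spectral weight is the first formula for $H$ in Proposition~\ref{spec}, giving
\[
 \sqrt{m}\,H(T)
 \ll
 \frac{x}{\sqrt{m}}
 \Big(1+\frac{x}{m}\,T^2\Big)^{-3/4}
 \min\big(1,(\Delta T)^{-3/2}\big).
\]
I would insert the bound of Lemma~\ref{newL} for $\mathcal{S}(m,T)$ into $\mathcal{E}_T(x,m)=\sqrt{m}\,H(T)\big(|\mathcal{S}(m,T)|+T^2\big)$; as remarked after Lemma~\ref{commonl}, the term $T^2$ is absorbed by $|\mathcal{S}(m,T)|$. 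It then suffices to estimate $\inf_{0<\Delta<(x/m)^{1/2}}\sup_{T\ge 4}\mathcal{E}_T(x,m)$ together with the elementary contributions $m^\epsilon\Delta(mx)^{1/2}$ and $x^{1/2}\log(2x)$ of Lemma~\ref{commonl}, the former dominating $m^\epsilon\Delta x$ because $m>x$.

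The factor $(1+\tfrac{x}{m}T^2)^{-3/4}$ makes $\sqrt{m}\,H(T)$ behave like $x/\sqrt{m}$ for $T\le(m/x)^{1/2}$ and like $x^{1/4}m^{1/4}T^{-3/2}$ for $T\ge(m/x)^{1/2}$, so the breakpoints of the resulting piecewise power of $T$ are $(m/x)^{1/2}$, $\Delta^{-1}$ and $m^{1/2-\theta}$, the last coming from the inner minimum in Lemma~\ref{newL}. The key computation is that, in the expected ordering $(m/x)^{1/2}\le\Delta^{-1}\le m^{1/2-\theta}$, the function $\mathcal{E}_T$ grows like $T^{1/2}$ up to its maximum at $T\asymp\Delta^{-1}$, of size $\ll m^\epsilon x^{1/4}m^{1/4+\theta}\Delta^{-1/2}$, and decays like $T^{-1}$ afterwards; hence the dyadic sum $\sum_{j\ge2}\mathcal{E}_{2^j}$ is $\ll m^\epsilon$ times this peak. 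The maximum of the low branch $T\le(m/x)^{1/2}$ is only $m^{1/2+\theta}$, and both this and $x^{1/2}\log(2x)$ turn out to be $\ll m^{(1+2\theta)/3+\epsilon}x^{1/3}$ on the whole range (the first because $m^{1/2+\theta}\le m^{(1+2\theta)/3}x^{1/3}$ reduces to $m^{1/2+\theta}\le x$, which holds since $m<x^{2/(1+2\theta)}$).

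Collecting terms, up to dominated contributions one is left with
\[
 \mathcal{E}(x,m)\ll m^\epsilon\big(\Delta(mx)^{1/2}+x^{1/4}m^{1/4+\theta}\Delta^{-1/2}\big),
\]
and I would balance the two terms by choosing $\Delta=x^{-1/6}m^{(4\theta-1)/6}$, which makes each of them equal to $x^{1/3}m^{(1+2\theta)/3}$ and yields the claimed bound. The main obstacle, and the only place where the endpoints of the range intervene, is the bookkeeping of the three breakpoints: the admissibility requirement $\Delta<(x/m)^{1/2}$ is equivalent to $m<x^{2/(1+2\theta)}$, while $\Delta^{-1}\le m^{1/2-\theta}$ and $(m/x)^{1/2}\le m^{1/2-\theta}$ follow at once from $m\ge x$ and $\theta\le1/2$. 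Thus the chosen $\Delta$ is admissible exactly on $x\le m<x^{2/(1+2\theta)}$, and the delicate point is to confirm that the peak at $T\asymp\Delta^{-1}$ really dominates both the low branch and the $T^{-1}$ tail rather than some intermediate scale.
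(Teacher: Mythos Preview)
Your proposal is correct and follows essentially the same route as the paper: both feed the bound of Lemma~\ref{newL} into Lemma~\ref{commonl}, locate the dominant dyadic contribution at $T\asymp\Delta^{-1}$, and balance against $(mx)^{1/2}\Delta$ with the identical choice $\Delta=x^{-1/6}m^{(4\theta-1)/6}$ (the paper streamlines slightly by using $(1+yT^2)^{-3/4}\le (yT^2)^{-3/4}$ up front instead of your explicit low/high-branch split, but this is cosmetic). The only slip is in your justification of $(m/x)^{1/2}\le m^{1/2-\theta}$: this inequality is $m^{2\theta}\le x$, which follows from the \emph{upper} bound $m<x^{2/(1+2\theta)}$ together with $\theta\le 1/2$, not from $m\ge x$; the conclusion stands unchanged.
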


\begin{proof}
 In this case we have $H(T)\ll m^{-1}xT^{2}(1+m^{-1}xT^2)^{-3/4}\min\big(1,(T\Delta)^{-3/2}\big)$
 in Lemma~\ref{commonl} and the bound of Lemma~\ref{newL} gives
 \begin{multline*}
  \mathcal{E}_T(x,m)
  \ll
  m^{-1/2+\epsilon} 
  x T^{2}\big(1+xT^2m^{-1}\big)^{-3/4}
  \min\big(1,(T\Delta)^{-3/2}\big)\cdot
  \\
  \big(1+\min(m^\theta, m^{1/2}T^{-1})\big).
 \end{multline*}
 Using $\big(1+xT^2m^{-1}\big)^{-3/4}\ll m^{3/4}x^{-3/4}T^{-3/2}$, the function to be optimized is 
 \begin{multline*}
  F(\Delta,T)
  =
  m^{1/2}x^{1/2}\Delta 
  \\
  +
  m^{1/4}x^{1/4}T^{1/2}\min\big(1,(T\Delta)^{-3/2}\big)
  \big(
  1+\min(m^\theta, m^{1/2}T^{-1})\big).
 \end{multline*}
 Namely, we have to prove 
 \begin{equation}\label{infsupF2}
  \inf_{0<\Delta<(x/m)^{1/2}}
  \sup_{T\ge 4} 
  F(\Delta, T)
  \ll 
  m^{(1+2\theta)/3}x^{1/3}.
 \end{equation}
 The bound $\Delta<(x/m)^{1/2}$ is required by Proposition~\ref{spec} and it is less important for the optimization. 
 Note that $F(\Delta,T)-m^{1/2}x^{1/2}\Delta$ is like in the proof of Proposition~\ref{mlex} except for a coefficient not depending on $\Delta$ and $T$. Hence the same argument applies to show that the supremum on $T$ in \eqref{infsupF2}, say $G(\Delta)$, satisfies 
 \begin{multline*}
  G(\Delta)
  \ll 
  m^{1/2}x^{1/2}\Delta 
  +
  m^{1/4}x^{1/4}\Delta^{-1/2}
  \big(
  1+\min(m^\theta, m^{1/2}T^{-1})\big)
  \\
  +
  m^{(1+\theta)/2}x^{1/4}\min\big(1,(m^{1/2-\theta}\Delta)^{-3/2}\big).
 \end{multline*}
 
 In the last range of the case $m\le x$ in the proof of Proposition~\ref{mlex}, the term $m^\theta$ gave the minimum and was of greater order than $m^{1/2}\Delta^{1/4}$ then it is natural to consider by continuity that this is still the situation. With this idea in mind, let us take
 $\Delta^{-1}=x^{1/6}m^{1/6-2\theta/3}$ to balance the corresponding terms. Note that in our range it satisfies the assumption $\Delta^{-1}>m^{1/2}x^{-1/2}$. The result is 
 that  \eqref{infsupF2} holds with 
 $m^{(1+2\theta)/3}x^{1/3}$ accompanied with the extra terms
 \[
  m^{(1-\theta)/3}x^{1/3}
  \min(m^\theta, m^{(1+2\theta)/3}x^{-1/6})
  +
  m^{(1+\theta)/2}x^{1/4}
  \min(1, m^{(\theta-1)/2}x^{1/4}).
 \]
 In our range the minima are $m^\theta$ and $m^{(\theta-1)/2}x^{1/4}$, respectively. Hence  it only remains to check that $m^{(1+2\theta)/3}x^{1/3}\ge m^{\theta}x^{1/2}$ which follows from $m\ge x$.
\end{proof}

Finally, we prove the estimate required for Theorem~\ref{maini}.

\begin{proposition}\label{impr}
 Let $\mathcal{E}(x,m)$ be as in Lemma~\ref{commonl} and $x$ and $m$ in the range indicated in Theorem~\ref{maini}. Then for every $\epsilon>0$
 \[
  \mathcal{E}(x,m)
  \ll
  x^{17/23+\epsilon}
  +
  (mx)^{17/46+\epsilon}
  +
  m^{(13+4\theta)/28}x^{1/4+\epsilon}.
 \]
\end{proposition}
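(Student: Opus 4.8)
The plan is to run the same optimization machinery developed for Proposition~\ref{mlex}, but in the range of Theorem~\ref{maini} the estimate of Lemma~\ref{newL} is no longer the sharpest available input, so the key change is to feed in the convexity-type bound of Lemma~\ref{KLinf} instead of (or in combination with) Lemma~\ref{newL}. Concretely, I would start from Lemma~\ref{commonl}, where $\mathcal{E}(x,m)$ is controlled by the term $m^\epsilon\Delta(x+(mx)^{1/2})$, the harmless $x^{1/2}\log(2x)$, and the dyadic sum of $\mathcal{E}_{2^j}(x,m)=\sqrt{m}\,H(T)\big(|\mathcal{S}(m,T)|+T^2\big)$. Since here $m\le x$ (the range $m^{11(1-4\theta)/7}<x$ with $\theta\le 7/64$ forces $y=x/m\ge 1$), I would use the bound $\sqrt{m}H(T)\ll x^{1/2}T^{-3/2}\min\big(1,(T\Delta)^{-3/2}\big)$ exactly as in Proposition~\ref{mlex}. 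The substantive difference is the factor $|\mathcal{S}(m,T)|$: I would substitute Lemma~\ref{KLinf}, namely $\mathcal{S}(m,T)\ll T^{17/12+\epsilon}\big(T+m^{1/4+\epsilon}\big)$, which is superior to Lemma~\ref{newL} precisely for the moderate values of $T$ that dominate in this intermediate regime.

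After this substitution, $\mathcal{E}_T(x,m)$ becomes (up to $m^\epsilon$) the product of $x^{1/2}T^{-3/2}\min\big(1,(T\Delta)^{-3/2}\big)$ with $T^{17/12}\big(T+m^{1/4}\big)$, producing two pieces, one scaling like $T^{17/12}$ and one like $T^{17/12-1}m^{1/4}$, each damped by the $\Delta$-cutoff $\min\big(1,(T\Delta)^{-3/2}\big)$. I would then set up the two-variable optimization $\inf_{0<\Delta<1}\sup_{T\ge 4}F(\Delta,T)$, where $F$ collects $x\Delta$ together with these two power-law pieces. As in Proposition~\ref{mlex}, the supremum over $T$ is attained at the breakpoint $T=\Delta^{-1}$ where the cutoff turns on, so $G(\Delta)=\sup_T F(\Delta,T)$ reduces to evaluating $F$ at $T\asymp\Delta^{-1}$, yielding something of the shape $x\Delta+x^{1/2}\Delta^{-17/12+\cdots}+m^{1/4}x^{1/2}\Delta^{-5/12+\cdots}$. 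The three target terms $x^{17/23}$, $(mx)^{17/46}$, and $m^{(13+4\theta)/28}x^{1/4}$ should emerge by balancing $x\Delta$ against each of the two nontrivial pieces and choosing $\Delta$ optimally; the exponent $17/23$ and the denominators $46$ and $28$ are the arithmetic signature of balancing the $T^{17/12}$ growth (note $17/12$ and the resulting $23/12$, $46/24$) against the linear $x\Delta$ loss. I expect the third term $m^{(13+4\theta)/28}x^{1/4}$ to be where the residual $\theta$-dependence enters, presumably from a regime in which Lemma~\ref{newL} (carrying the $m^\theta$ factor) still beats the convexity bound for the largest relevant $T$, so the final estimate is a maximum of contributions from both lemmas.

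The main obstacle will be bookkeeping the several regimes cleanly: with two competing bounds for $\mathcal{S}(m,T)$ (Lemma~\ref{newL} and Lemma~\ref{KLinf}) and the two arguments of the minimum in each, the line $[4,\infty)$ in $T$ splits into several subintervals whose ordering depends on the position of $x$ relative to powers of $m$, and one must verify that the hypotheses $m^{11(1-4\theta)/7}<x<m^{\min(92\theta-5,\,46\theta)/5}$ of Theorem~\ref{maini} are exactly what guarantees that the claimed three terms dominate all others (in particular that the $m^\epsilon\Delta(x+(mx)^{1/2})$ term from Lemma~\ref{commonl} and the term $T^2$ in $\mathcal{E}_T$ are absorbed). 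I would confirm that the chosen $\Delta$ satisfies the admissibility constraint $0<\Delta<1$ throughout, and check the boundary cases at the two endpoints of the $x$-range to see the interval collapse predicted in the discussion after Theorem~\ref{maini}; this endpoint analysis, rather than any single inequality, is the delicate part.
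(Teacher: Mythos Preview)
Your plan contains a genuine error at the outset: the claim that the range $m^{11(1-4\theta)/7}<x$ with $\theta\le 7/64$ forces $y=x/m\ge 1$ is false. For every $\theta\in(5/48,7/64]$ one has $11(1-4\theta)/7<1$ (the equality $11(1-4\theta)/7=1$ gives $\theta=1/11<5/48$), so the lower endpoint of the $x$-range lies strictly below $m$; for instance at $\theta=7/64$ the range is $m^{99/112}<x<m^{161/160}$, most of which satisfies $x<m$. Consequently you cannot use the $y>1$ form of $H(T)$ throughout. In the subrange $x\le m$ one must instead use the $y\le 1$ bound from Proposition~\ref{spec}, which after $1+yt^2>yt^2$ gives $\sqrt{m}\,H(T)\ll (mx)^{1/4}T^{-3/2}\min\bigl(1,(T\Delta)^{-3/2}\bigr)$, and the admissibility constraint becomes $\Delta<(x/m)^{1/2}$ rather than $\Delta<1$. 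This is exactly where the term $(mx)^{17/46}$ and the factor $x^{1/4}$ in the third term arise; your $x^{1/2}$-based expressions will not produce them.

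Apart from this, your intuition about combining Lemma~\ref{newL} and Lemma~\ref{KLinf} is correct, but the mechanics in the paper differ from the $\inf_\Delta\sup_T$ optimization you describe. The paper treats $m<x$ and $x\le m$ separately; in each case it simply fixes $\Delta$ (namely $\Delta=x^{-6/23}$ when $m<x$ and $\Delta^{-1}=(mx)^{3/23}$ when $x\le m$) and then checks the bound for $\sqrt{m}\,H(T)\min\bigl(m^\theta T^2,\,T^{17/12}(T+m^{1/4})\bigr)$ at the finitely many critical values of $T$ where a piecewise-power expression changes slope: $T_0=m^{1/4}$, $T_1=m^{3(1-4\theta)/7}$ (the crossover between the two lemmas), $T_2=\Delta^{-1}$, and $T_3=m^{12\theta/5}$. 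The $\theta$-dependent term $m^{(13+4\theta)/28}x^{1/4}$ is precisely the value at $T_1$, confirming your guess that it comes from the interface of the two bounds for $\mathcal{S}(m,T)$. The range hypotheses of Theorem~\ref{maini} are then used inequality by inequality to show the contributions at $T_0,T_2,T_3$ are dominated by the three announced terms.
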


\begin{proof}
 Let us consider first the case $m<x$ that only occurs for $\theta>5/46$ because $x<m^{46\theta/5}$.
 We have 
 $\sqrt{m} H(T)
 \ll x^{1/2}T^{-3/2}\min\big(1, (\Delta T)^{-3/2}\big)$.
 Let us take $\Delta=x^{-6/23}$ in Lemma~\ref{commonl} and use Lemma~\ref{KLinf}. If we prove
 \[
  \sqrt{m}H(T) T^{17/12}(T+m^{1/4})
  \ll x^{17/23}
 \]
 then, since the left hand side has a potential behavior in~$T$ and eventually decays to zero, we deduce $\mathcal{E}(x,m)\ll x^{17/23+\epsilon}$ which fulfills our needs. Note that $\Delta^{-1}>m^{1/4}$ then the supremum only may be attached at $T\asymp \Delta^{-1}$ or $T\asymp m^{1/4}$. Adding both contributions we get 
 $x^{17/23}+m^{11/48}x^{1/2}$ and this is $O(x^{17/23})$ because $m<x$. 
 \smallskip
 
 Now we deal with the case $x\le m$. 
 Using $1+yt^2> yt^2$ in Proposition~\ref{spec} we have 
 $\sqrt{m} H(T)
 \ll (mx)^{1/4}T^{-3/2}\min\big(1, (\Delta T)^{-3/2}\big)$.
 Let us take $\Delta^{-1}=(mx)^{3/23}$ (it is easy to check that in our range $\Delta<y^{1/2}$ is fulfilled in Proposition~\ref{spec}). 
 Combining Lemma~\ref{newL} and Lemma~\ref{KLinf} to bound $\mathcal{E}(m,T)$ in Lemma~\ref{commonl}, it is enough to prove, as before,
 \begin{equation}\label{aux_imp}
  \sqrt{m}
  H(T)
  \min\big(m^\theta T^2, T^{17/12}(T+m^{1/4})\big)
  \ll
  (mx)^{17/46+\epsilon}
  +
  m^{(13+4\theta)/28}x^{1/4+\epsilon}
 \end{equation}
 for every $T\ge 4$. 
 We can, of course, replace $T+m^{1/4}$ by $\max(T,m^{1/4})$ and the values
 \[
  T_0=m^{1/4},\quad
  T_1=m^{3(1-4\theta)/7},\quad
  T_2=\Delta^{-1}\quad\text{and}\quad
  T_3=m^{12\theta/5},
 \]
 coming respectively from the equations $T_0=m^{1/4}$, $m^\theta T^2=T^{17/12}m^{1/4}$, $1=T\Delta$ and $m^\theta T^2=T^{29/12}$, mark the points at which there is a possible change in the dominant value of the maximum or the minima. In the rest of the cases, the left hand side of \eqref{aux_imp} is comparable to a function of the form $c(x,m)T^\alpha$, $\alpha\ne 0$. this is strictly monotonic in the intervals determined by the $T_j$ and then it is enough to check  \eqref{aux_imp} for $T=T_0,\dots, T_3$. 
 
 In our ranges a calculation shows that $T_3\ge (mx)^{3/23}=\Delta^{-1}$ then we can substitute
 $\min\big(1, (\Delta T_3)^{-3/2}\big)$ by $(\Delta T_3)^{-3/2}$. Hence
 \[
  \sqrt{m}H(T_3)m^\theta T_3^2
  \ll
  m^{41/92-7\theta/5}
  x^{41/92}
  =
  (mx)^{17/46}
  \big(m^{1-92\theta/5} x)^{7/92}
 \]
 and this is less than $(mx)^{17/46}$ because $x<m^{(92\theta-5)/5}$. 
 
 For $T_0$, $T_1$ and $T_2$ we use 
 $\min\big(1, (\Delta T_3)^{-3/2}\big)\le 1$. In this way, for $T_1$
 \[
  \sqrt{m}H(T_1)m^\theta T_1^2
  \ll
  m^{(13+4\theta)/28}x^{1/4}
 \]
 which is part of the right hand side of \eqref{aux_imp}.
 
 Finally, for $T_0$ and $T_2$ we choose $T^{17/12}(T+m^{1/4})$ in the minimum. For~$T_0$
 \[
  \sqrt{m}H(T_0)T_0^{17/12}(T_0+m^{1/4})
  \ll
  m^{23/48}x^{1/4}
 \]
 and $23/48\le (13+4\theta)/28$ because $\theta\ge 5/48$. For $T_2$, a calculation shows 
 \begin{multline*}
  \sqrt{m}H(T_2)T_2^{17/12}(T_2+m^{1/4})
  \ll
  (mx)^{1/4}
  \Delta^{1/12}\big(\Delta^{-1}+m^{1/4}\big)
  \\
  =
  (mx)^{17/46}+m^{45/92}x^{11/46}.
 \end{multline*}
 The last term can be written as $(m^{45}/x)^{1/92}x^{1/4}$. Since $x>m^{11(1-4\theta)/7}$, 
 $m^{45}/x<m^{4(76+11\theta)/7}$ 
 and 
 $4(76+11\theta)/7\le 92(13+4\theta)/28$ because $\theta\ge 5/48$. 
\end{proof}

\begin{proof}[Proof of Theorem~\ref{main} and Theorem~\ref{maini}]
 If $m$ is odd then 
 $S(x,m)=8\sigma(m)x/m+\mathcal{E}(x,m)$
 and the result follows directly from Proposition~\ref{mlex}, Proposition~\ref{mgex} and Proposition~\ref{impr}.
 If $m$ is even, we use Lemma~\ref{SA}. By the comments at the end of section~\ref{prelim} we know that the main terms coming from $\widetilde{A}$ give rise to the main term in~\eqref{S_asymp} and then 
 \[
  E(x,m)=
  2
  \Big(
  \mathcal{E}\big(\frac{x}{2^2},\frac{m}{2^2}\big)
  -
  \dots
  +(-1)^{k-2}
  \mathcal{E}\big(\frac{x}{2^k},\frac{m}{2^k}\big)
  \Big)
  +(-1)^{k-1}
  \mathcal{E}\big(\frac{x}{2^k},\frac{m}{2^k}\big)
 \]
 where the last $\mathcal{E}$ corresponds to the group
 $\Gamma=\Gamma_0(2)/\{\pm \text{\rm Id}\}$
 and the rest to~$\Gamma=\text{\rm PSL}_2(\Z)$.
 
 In Proposition~\ref{mlex}, Proposition~\ref{mgex} and Proposition~\ref{impr} all the exponents in the bounds are positive then we have a geometric gain replacing $(x,m)$ by $(x/2,m/2)$ when we stay in the same rang. By the continuity of the bound (recall the figures showing the graphs), we get comparable   results each time we change the range. As there are only a finite number of ranges, we conclude that the bounds for $\mathcal{E}(x,m)$ are also valid for $E(x,m)$.
 Note that the range required in Proposition~\ref{mgex} can be assumed because otherwise $E(x,m)$ supersedes the main term and $S(x,m)=O(x^{1+\epsilon})$ follows trivially from $r(n)=O(n^\epsilon)$. 
\end{proof}

\section{Comparison with other results and further comments}

In \cite{chamizo_c} it is studied the asymptotic behavior of $S(x,m)$ under the conjecture
\begin{equation}\label{conj4}
 \sum_{T\le |t_j|<2T}
 |u_j(z_0)|^4
 =
 O\big(T^{2+\epsilon}\big)
 \qquad\text{for every }\epsilon>0.
\end{equation}
As the matter of fact something similar is known in the so-called $q$-aspect \cite{blomer}.

Unfortunately the main result in \cite{chamizo_c} is not written explicitly in terms of $\theta$ but it was substituted the upper bound 
available at that time, $\theta\le 5/28$. If we replace along its proof $5/28$ by $\theta$, we would get

\begin{theorem}[cf. {\cite[Th.3.1]{chamizo_c}}]\label{oldth}
 With the notation of Theorem~\ref{main} and under \eqref{conj4}, we have 
 \[
  x^{-\epsilon}E(x,m)
  \ll
  x^{2/3}
  +
  x^{1/2}m^{(1+4\theta)/8}
  +
  x^{1/3}m^{1/3}
  +
  \min\big(
  x^{1/2}m^{1/4}
  ,
  x^{1/4}m^{(3+4\theta)/8}
  \big)
 \]
 for every $\epsilon>0$ and $1\le m\le x^2$.
\end{theorem}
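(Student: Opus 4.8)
The plan is to re-run the optimization of Proposition~\ref{mlex} and Proposition~\ref{mgex} almost verbatim, the single but decisive change being the estimate fed in for $\mathcal{S}(m,T)$. Under \eqref{conj4} one has the clean fourth-moment bound $\sum_{T\le|t_j|<2T}|u_j(z_0)|^4\ll T^{2+\epsilon}$, which in effect is stronger than the pair Lemma~\ref{newL}, Lemma~\ref{KLinf} that drives the unconditional arguments. Pairing it with the Hecke mean value of Lemma~\ref{kuznetsov} through Cauchy's inequality gives
\[
 \mathcal{S}(m,T)
 \le
 \Big(\sum_{T\le|t_j|<2T}|\lambda_j(m)|^2\Big)^{1/2}
 \Big(\sum_{T\le|t_j|<2T}|u_j(z_0)|^4\Big)^{1/2}
 \ll
 m^\epsilon\big(T^2+m^{1/4}T\big),
\]
and combining this with the pointwise estimate $\mathcal{S}(m,T)\ll m^{\theta+\epsilon}T^2$, which follows from $|\lambda_j(m)|\ll m^{\theta+\epsilon}$ (see \eqref{lth}) and Bessel's inequality \cite[Prop.7.2]{iwaniec}, I would record
\[
 \mathcal{S}(m,T)
 \ll
 m^\epsilon\min\big(m^\theta T^2,\;T^2+m^{1/4}T\big).
\]
This makes $\mathcal{S}(m,T)$ behave like $m^\theta T^2$ for $T<m^{1/4-\theta}$, like $m^{1/4}T$ for $m^{1/4-\theta}\le T<m^{1/4}$, and like $T^2$ for $T\ge m^{1/4}$.

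Next I would substitute this into Lemma~\ref{commonl}, distinguishing the two shapes of $\sqrt{m}\,H(T)$ from Proposition~\ref{spec}. For $m\le x$ one has $\sqrt{m}\,H(T)\ll x^{1/2}T^{-3/2}\min\big(1,(T\Delta)^{-3/2}\big)$, while for $m>x$ (legitimate since the statement ranges over $1\le m\le x^2$) one has $\sqrt{m}\,H(T)\ll xm^{-1/2}$ on the plateau $T\le(m/x)^{1/2}$ and $\sqrt{m}\,H(T)\ll m^{1/4}x^{1/4}T^{-3/2}\min\big(1,(T\Delta)^{-3/2}\big)$ beyond it, after the crude bound $(1+xT^2/m)^{-3/4}\ll m^{3/4}x^{-3/4}T^{-3/2}$. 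In each regime $\mathcal{E}_T$ is a piecewise monomial in $T$ times $\min\big(1,(T\Delta)^{-3/2}\big)$, so $\sup_{T\ge 4}\mathcal{E}_T$ is attained at one of the breakpoints $T\asymp\Delta^{-1}$, $m^{1/4-\theta}$, $m^{1/4}$ or $(m/x)^{1/2}$; adding the constant-eigenfunction error $m^\epsilon\Delta\big(x+(mx)^{1/2}\big)$ leaves a one-parameter infimum over $\Delta$, exactly as in the two propositions above.

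Carrying out that infimum is the bulk of the work, and I expect the four output terms to arise as follows. The $T^2$-range with $\Delta\asymp x^{-1/3}$ and $T\asymp\Delta^{-1}$ produces $x^{2/3}$ from $m\le x$ and, with $\Delta^{-1}\asymp(mx)^{1/6}$, the term $x^{1/3}m^{1/3}$ from $m>x$. The breakpoint $T\asymp m^{1/4-\theta}$, where the $m^\theta T^2$- and $m^{1/4}T$-ranges meet, contributes $x^{1/2}m^{(1+4\theta)/8}$ from $m\le x$ and $x^{1/4}m^{(3+4\theta)/8}$ from $m>x$. Finally, on the plateau of the $m>x$ regime the purely Cauchy--Schwarz bound $\mathcal{S}\ll m^{1/4}T$ evaluated at $T\asymp(m/x)^{1/2}$ gives the $\theta$-free term $x^{1/2}m^{1/4}$. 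The last two terms appear together as $\min\big(x^{1/2}m^{1/4},\,x^{1/4}m^{(3+4\theta)/8}\big)$ because the plateau overtakes the breakpoint $m^{1/4-\theta}$ precisely when $x<m^{(1+4\theta)/2}$, which is exactly the inequality $x^{1/2}m^{1/4}\le x^{1/4}m^{(3+4\theta)/8}$; writing the minimum spares an extra case split.

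The hard part is not any single estimate but the bookkeeping. One must check, over the whole range $1\le m\le x^2$ and every sub-interval of $T$, that the subordinate contributions produced at the non-optimal breakpoints are dominated by the four displayed terms, all while honouring the constraint $0<\Delta<\min\big(1,(x/m)^{1/2}\big)$ imposed in Proposition~\ref{spec}. As in Proposition~\ref{mlex} and Proposition~\ref{mgex} this reduces to a finite list of monomial inequalities in $x$ and $m$, the comparisons hinging on the thresholds $x=m$ and $x=m^{(1+4\theta)/2}$. Once these are verified, the bound for $\mathcal{E}(x,m)$ transfers to $E(x,m)$ through Lemma~\ref{SA} exactly as in the proof of Theorem~\ref{main}, since every exponent appearing is positive and the estimate is continuous across the thresholds.
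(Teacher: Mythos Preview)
The paper does not give a self-contained proof of this statement; it merely records it as \cite[Th.~3.1]{chamizo_c} with the numerical value $5/28$ replaced by the parameter~$\theta$. Your reconstruction is correct and follows exactly the method the paper itself attributes to \cite{chamizo_c}: apply Cauchy's inequality directly to $\mathcal{S}(m,T)$ so that the fourth-moment hypothesis \eqref{conj4} combines with Lemma~\ref{kuznetsov} to give $\mathcal{S}(m,T)\ll m^\epsilon(T^2+m^{1/4}T)$, intersect this with the pointwise bound $m^{\theta+\epsilon}T^2$, and then run the $\Delta$--$T$ optimization through Lemma~\ref{commonl} along the lines of Propositions~\ref{mlex} and~\ref{mgex}; this is precisely the ``Cauchy applied directly to \eqref{defE}'' approach that the paper mentions at the start of \S3 as the method of \cite{chamizo_c}, so your sketch is faithful to the intended argument.
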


It turns out that this bound coincides (disregarding~$\epsilon$) with the best known bound for the binary additive problem included in \cite{meurman}. 
This famous problem, connected to the 4th moment of the Riemann zeta function \cite{HeBr}, deals with the sums
\[
 D(x,m)= \sum_{n\le x}\tau(n)\tau(n+m).
\]
The coincidence of the bounds is quite surprising because the methods appearing in \cite{meurman} are not entirely spectral, namely it is employed a combinatorial decomposition of the divisor function introduced by Heath-Brown. 
Recently in \cite{BaFr} it has been proved that a purely spectral approach is possible with a modification of \cite{motohashi}. 
Although the analogy between $r(n)$ and $\tau(n)$ is quite natural because the former is like a divisor function in arithmetic progressions, it is puzzling that the proof of Theorem~\ref{oldth} is much simpler than that of the main results of \cite{meurman} or \cite{motohashi} and still leads to the same result under \eqref{conj4}. On the other hand \eqref{conj4} has not an analogue in these papers which are unconditional. It vaguely suggests that this conjecture admits a proof via the Kuznetsov formula, which is the core argument in \cite{motohashi} and \cite{meurman}. Another possible approach to \eqref{conj4} is to use a lift (a Waldspurger like formula) to turn the sum of \eqref{conj4} at the considered special points into a sum of special values of automorphic $L$-functions (this is related to \cite{PeYo} and \cite{young} mentioned below, see also \cite{KaSa} for an application of this idea).

A very vague argument suggesting that \eqref{conj4} could be affordable relies on the general bound $\|\phi\|_\infty\le \lambda^{1/4}$ \cite{SeSo} for a normalized eigenfunction $\phi$ with eigenvalue $\lambda$. In our case this is $|u(z)|\ll T^{1/2}$ and it marks a kind of barrier for the difficulty (the ultimate conjecture is $|u(z)|\ll T^{\epsilon}$, see \cite{sarnak_aqc}). It can be obtained from the average result $\sum_{T\le |t_j|<T+1}|u(z)|^2=O(T)$ dropping all the terms except one and from \eqref{conj4} except for~$\epsilon$, while higher power analogues would break the barrier of the difficulty. Although $\sum_{T\le |t_j|<T+1}|u(z)|^2=O(T)$ and \eqref{conj4} have the same $L^\infty$ implications, the former follows from pretrace formula while our attempts to get a spectral proof of \eqref{conj4} have been unsuccessful so far. 
\medskip

An important point to emphasize is that under \eqref{conj4} we have the uniformity in \eqref{S_asymp} for
$m=O\big(x^{2-\epsilon}\big)$
even using results much weaker than Lemma~\ref{kisal} because the relevant term in Theorem~\ref{oldth} is~$x^{1/2}m^{1/4+\epsilon}$.

A natural question is to what extent we can relax \eqref{conj4} to improve the unconditional range in Corollary~\ref{c64_39}. 
This question admits a neat answer.

\begin{theorem}
 Let $\beta\ge 2$ such that
 \begin{equation}\label{conj4beta}
  \sum_{T\le |t_j|<2T}
  |u_j(z_0)|^4
  =
  O\big(T^{\beta+\epsilon}\big)
  \qquad\text{for every }\epsilon>0.
 \end{equation}
 Then the asymptotic formula \eqref{S_asymp} holds for $m=m(x)$ satisfying $m=O\big(x^{\eta}\big)$ for any $\eta<\beta/(\beta-1)$. 
\end{theorem}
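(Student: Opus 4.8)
The plan is to run the machinery behind Theorem~\ref{oldth} and Proposition~\ref{mgex}, feeding in the conditional estimate coming from \eqref{conj4beta} in place of the unconditional input. First I would record that \eqref{S_asymp} is equivalent to $\mathcal{E}(x,m)=o(x)$: the main term in \eqref{S_asymp} is $\gg x$ (since $|2^{k+1}-3|\sigma(m/2^k)\gg m$), so it suffices to prove $E(x,m)\ll x^{1-\delta}$ for some $\delta>0$, and by the reduction at the end of the proof of Theorem~\ref{main} (via Lemma~\ref{SA} and the positivity of the exponents in the bounds) this follows from the same estimate for the quantity $\mathcal{E}(x,m)$ of Lemma~\ref{commonl} attached to the two groups of Proposition~\ref{spec}. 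Since the formula is easier for small $m$, the only real content is the range $x\le m\le x^\eta$ with $\eta<\beta/(\beta-1)$.

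The decisive step is to bound $\mathcal{S}(m,T)$ of \eqref{defE}. Using Cauchy's inequality together with Lemma~\ref{kuznetsov} and the hypothesis \eqref{conj4beta} I would get
\[
 \mathcal{S}(m,T)^2
 \le
 \sum_{T\le|t_j|<2T}|\lambda_j(m)|^2
 \sum_{T\le|t_j|<2T}|u_j(z_0)|^4
 \ll
 \big(T^{2+\epsilon}+m^{1/2+\epsilon}\big)T^{\beta+\epsilon},
\]
whence $\mathcal{S}(m,T)\ll T^{1+\beta/2+\epsilon}+m^{1/4+\epsilon}T^{\beta/2+\epsilon}$. This plays the role of Lemma~\ref{newL}; note that, in contrast with Theorem~\ref{main}, the pointwise exponent $\theta$ does not enter, in agreement with the $\theta$-free conclusion.

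Next I would insert this into Lemma~\ref{commonl}. For $x\le m$ the weight $\sqrt m\,H(T)$ has the two shapes $\sqrt m\,H(T)\asymp m^{-1/2}x\min(1,(\Delta T)^{-3/2})$ for $T<(m/x)^{1/2}$ and $\sqrt m\,H(T)\asymp (mx)^{1/4}T^{-3/2}\min(1,(\Delta T)^{-3/2})$ for $T>(m/x)^{1/2}$, and it is essential to keep the first (undecayed) shape below the transition scale $T\asymp(m/x)^{1/2}$ rather than to majorize as in Proposition~\ref{mgex}. Substituting the $m^{1/4+\epsilon}T^{\beta/2+\epsilon}$ branch of $\mathcal{S}(m,T)$ and scanning the dyadic ranges, the dominant contribution occurs at $T\asymp(m/x)^{1/2}$ and equals, up to $m^\epsilon$,
\[
 m^{-1/2}x\cdot m^{1/4}\big((m/x)^{1/2}\big)^{\beta/2}
 =
 m^{(\beta-1)/4}x^{(4-\beta)/4}.
\]
For $\beta=2$ this is $x^{1/2}m^{1/4}$, matching the relevant term of Theorem~\ref{oldth}. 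Choosing $\Delta$ so that both the term $(mx)^{1/2}\Delta$ of Lemma~\ref{commonl} and the tail near $T\asymp\Delta^{-1}$ are dominated by this quantity (for $\beta\le 3$ one may take $\Delta\asymp m^{-1/4}$, and for $\beta>3$ the balance $\Delta\asymp x^{-1/(2(\beta-1))}$) one arrives at $\mathcal{E}(x,m)\ll m^{(\beta-1)/4}x^{(4-\beta)/4+\epsilon}+x^{2/3}$, which is $o(x)$ precisely when $m^{\beta-1}<x^{\beta-\epsilon'}$, i.e. $m=O(x^\eta)$ with $\eta<\beta/(\beta-1)$.

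The hard part will be the optimization bookkeeping: one must verify that \emph{no} dyadic block overshoots the binding term $m^{(\beta-1)/4}x^{(4-\beta)/4}$. The danger is the range $T>m^{1/4}$, where $\mathcal{S}(m,T)\ll T^{1+\beta/2}$ grows, so the choice of $\Delta$ must keep $\Delta^{-1}\lesssim m^{1/4}$ to force the cut-off $(\Delta T)^{-3/2}$ to beat this growth; this is where the two sub-cases $\beta\lessgtr 3$ enter, the binding contribution being the transition-scale term for $\beta\le 3$ and the $T\asymp\Delta^{-1}$ term for $\beta>3$, both giving the same threshold. It remains to check that the companion range $m\le x$ contributes only $O(x^{2/3}+x^{1/2}m^{1/4})=o(x)$ and that the alternating-sum reduction of Lemma~\ref{SA} preserves the bound, both of which are routine given the positivity of the exponents.
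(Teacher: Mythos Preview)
Your approach is essentially the same as the paper's: apply Cauchy's inequality to $\mathcal{S}(m,T)$, pairing Lemma~\ref{kuznetsov} with the hypothesis \eqref{conj4beta} to get $\mathcal{S}(m,T)\ll T^{\beta/2+\epsilon}(T+m^{1/4})$, then feed this into Lemma~\ref{commonl} and optimize. Both arguments isolate the identical binding term $m^{(\beta-1)/4}x^{(4-\beta)/4}$, which is $o(x)$ precisely when $m=O(x^\eta)$ with $\eta<\beta/(\beta-1)$.

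The execution differs only in bookkeeping. The paper first invokes Corollary~\ref{c64_39} to dispose of $\beta\ge 3$ unconditionally (so your case split $\beta\lessgtr 3$ is unnecessary, and for $\beta\ge 4$ your tail would in fact diverge). It then takes $\Delta=x^{1/2}m^{-1/2-\epsilon}$, essentially the \emph{maximal} admissible value in Proposition~\ref{spec}, and uses only the single upper bound $\sqrt{m}\,H(T)\ll (mx)^{1/4}T^{-3/2}\min\big(1,(\Delta T)^{-3/2}\big)$ rather than tracking the two regimes of $H$ separately. With this choice $\Delta^{-1}\asymp (m/x)^{1/2}$ is exactly your ``transition scale'', so the binding contribution surfaces at $T\asymp\Delta^{-1}$ and at $T\asymp m^{1/4}$, and only two evaluations are needed. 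Your choice $\Delta\asymp m^{-1/4}$ also works but forces you to examine an extra interval $(m/x)^{1/2}<T<\Delta^{-1}$; the paper's choice collapses that interval and makes the verification shorter. A minor slip: your claimed bound $O(x^{2/3}+x^{1/2}m^{1/4})$ for the range $m\le x$ is the $\beta=2$ case; for general $\beta$ one gets $x^{\beta/(\beta+1)+\epsilon}+x^{1/2}m^{1/4+\epsilon}$, which is still $o(x)$ and hence harmless.
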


To our knowledge, the best known exponent in \eqref{conj4beta} is $\beta=17/6$ deduced by convexity from Lemma~\ref{Linf} as in Lemma~\ref{KLinf}. 
Actually the main result in the recent preprint \cite{PeYo} might improve this exponent via \cite[(2.1), (2.2)]{young} but it is unclear if this is only valid for $z_0=i$. 
See also \cite{steiner} for an advance in the case of $S^3$ which is a completely different setting with a somehow similar flavor. 

\begin{proof}
 We assume $m>x^{64/39-\epsilon}$ by Corollary~\ref{c64_39} and consequently~$\beta<3$.  
 
 By Lemma~\ref{kuznetsov}, \eqref{conj4beta} and Cauchy's inequality,
 \[
  \mathcal{S}(m,T)
  \ll 
  T^{\beta/2+\epsilon}
  \big(T+m^{1/4+\epsilon}\big).
 \]
 Taking $\Delta= x^{1/2}m^{-1/2-\epsilon}$
 in Lemma~\ref{commonl}, the uniformity is assured if
 \[
  (mx)^{1/4}
  T^{-3/2}
  \min\big(1,(\Delta T)^{-3/2}\big)
  T^{\beta/2}\big(T+m^{1/4}\big)
  \ll x^{1-\delta}
 \]
 for some $\delta>0$ and any $T>4$. The maximum of the left hand side is attached at $T\asymp m^{1/4}$ or $T\asymp \Delta^{-1}$, then we have uniformity whenever 
 \[
  (mx)^{1/4}
  m^{-3/8}
  m^{\beta/8+1/4}
  +
  (mx)^{1/4}
  \Delta^{(3-\beta)/2}
  \big(\Delta^{-1}+m^{1/4}\big)
  \ll x^{1-\delta}.
 \]
 The first term imposes the condition $m=O\big(x^{\eta}\big)$ with  $\eta<6/(\beta+1)$ that is satisfied when  $\eta<\beta/(\beta-1)$ because~$2\le \beta<3$. In our range $\Delta^{-1}+m^{1/4}\ll m^{1/4}$ then it only remains to consider 
 $m^{1/2}x^{1/4}\Delta^{(3-\beta)/2}\ll x^{1-\delta}$
 and a calculation shows that this is equivalent to $m=O\big(x^{\eta}\big)$ with  $\eta<\beta/(\beta-1)$.
\end{proof}
\medskip

\paragraph{\textbf{Acknowledgements.}}
I am deeply indebted to R. Steiner for sharing his thoughts about~\eqref{conj4}, providing some references and suggesting the application of Cauchy's inequality in the form employed in the proof of Lemma~\ref{newL}.
I want to show my gratitude especially to E.~Valenti for the support  and tireless patience in these difficult times.

\providecommand{\bysame}{\leavevmode\hbox to3em{\hrulefill}\thinspace}
\providecommand{\MR}{\relax\ifhmode\unskip\space\fi MR }
\providecommand{\MRhref}[2]{%
  \href{http://www.ams.org/mathscinet-getitem?mr=#1}{#2}
}
\providecommand{\href}[2]{#2}

\end{document}